\newcommand{\R}{{\mathbb R}}
\newcommand{\be}[1]{\begin{equation}\label{#1}}
\newcommand{\ee}{\end{equation}}
\renewcommand{\(}{\left(}
\renewcommand{\)}{\right)}
\newcommand{\ird}[1]{\int_{\R^d}{#1}\;dx}
\newcommand{\nrm}[2]{\|#1\|_{L^{#2}(\R^d)}}
\def\cprime{$'$}
\newcommand{\finprf}{\unskip\null\hfill$\;\square$\vskip 0.3cm}
\newtheorem{theorem}{Theorem}
\newtheorem{lemma}[theorem]{Lemma}
\newtheorem{corollary}[theorem]{Corollary}
\newtheorem{proposition}[theorem]{Proposition}
\newtheorem{remark}{\sl Remark}
\begin{document}
\title[Asymptotics of fast diffusion equations]{Fast diffusion equations: matching large time asymptotics by relative entropy methods}
\author[J. Dolbeault and G. Toscani]{Jean Dolbeault and Giuseppe Toscani}
\address[J. Dolbeault]
{Ceremade (UMR CNRS no. 7534), Universit\'e Paris-Dauphine, Place de Lattre de Tassigny, F-75775 Paris C\'edex 16, France}
\email{dolbeaul@ceremade.dauphine.fr}
\address[G. Toscani]
{University of Pavia
Department of Mathematics, Via Ferrata~1, 27100 Pavia, Italy}
\email{giuseppe.toscani@unipv.it}
\date{\today}
\begin{abstract}
A non self-similar change of coordinates provides improved matching asymptotics of the solutions of the fast diffusion equation for large times,
compared to already known results, in the range for which Barenblatt solutions have a finite second moment. The method is based on relative
entropy estimates and a time-dependent change of variables which is determined by second moments, and not by the scaling corresponding to the
self-similar Barenblatt solutions, as it is usually done.
\end{abstract}
\keywords{Fast diffusion equation; porous media equation; Barenblatt solutions;
Hardy-Poincar\'e inequalities; large time behaviour; second moment; asymptotic expansion;
 intermediate asymptotics; sharp rates; optimal constants}
\def\subjclassname{\it\textup{2010} AMS subject classification}
\subjclass[2010]{35B40; 35K55; 39B62}
\maketitle
\thispagestyle{empty}

\section{Introduction and main results}\label{Sec:Intro}

Consider on $\R^d$ the fast diffusion equation \be{Eqn} \frac{\partial v}{\partial \tau}+\nabla\cdot\(v\,\nabla v^{m-1}\)=0 \ee for some
$m\in(m_c,1)$ with $m_c:=(d-2)/d$. Assume that the initial data is a given nonnegative function $v_0$ in $L^1(\R^d)$. It is well known (see for
instance~\cite{Ba52}) that the large time behavior of the solution is captured by the \emph{Barenblatt solutions} given for any $(\tau,y)\in\R^+\times\R^d$ by
\[
\mathsf B(\tau,y):=(1+\tau)^{-\frac 1{m-m_c}}\(D+\tfrac 1{2\,d\,(m-m_c)}(1+\tau)^{-\frac 2{d\,(m-m_c)}}\,|y|^2\)^{-\frac{1}{1-m}}_+
\]
where $D>0$ is determined by the condition $\nrm{v_0}1=\nrm{\mathsf B(\tau,\cdot)}1$. Using entropy methods, it has been established in
\cite{BBDGV} how a linearized problem involving the relative entropy and the relative Fisher information determines the best rate of convergence
towards the Barenblatt solution. The note \cite{BDGV} is devoted to a refinement of the estimates in which the dependence on $v_0$ is clarified
and the precise value of the best possible rate of convergence is computed in terms of a spectral gap of the linearized operator associated to the
relative entropy and the relative Fisher information for all values of $m<1$. By taking advantage of the translation invariance, it is moreover
possible to impose that the solution evolves in the orthogonal of the eigenspace associated to the first non-zero eigenvalue of the linearized
operator for $m\in(m_1,1)$ with $m_1:=(d-1)/d$, thus providing an improved rate of convergence. The corresponding conserved quantity is the center
of mass, while the generators of the eigenspace are the derivatives of the Barenblatt solution with respect to each of the coordinates. There is
no other conserved quantity known, so that further improvements cannot be achieved directly by this method.

One may however notice that the eigenspace corresponding to the second non-zero eigenvalue in the range $m\in(m_1,1)$ is generated by the
infinitesimal dilation of the Barenblatt solution. It is therefore natural to try to adjust the Barenblatt solution by a scaling. This can be done
by taking a time-dependent change of variables where the scale is determined by the solution itself and not anymore by its asymptotic,
self-similar behavior, thus providing improved convergence rates. Asymptotically, we will recover the self-similar profile, but with a better matching. There is a price to pay: the rescaled equation has a time-dependent coefficient, which converges to a constant. From the point of view of the entropy -- entropy production inequality, however, nothing is changed, which is the main observation of this paper.

Let $\widetilde m_1:=d/(d+2)$. In the range $m\in(\widetilde m_1,m_1)$, the infinitesimal dilation
of the Barenblatt solution generates the eigenspace corresponding to the first non-zero eigenvalue. Our time-dependent change of variables
therefore improves on the rate of convergence for any $m\in(\widetilde m_1,m_1)$, and also for any $m\in(m_1,1)$ if the center of mass is chosen at the origin.

The reader interested in understanding the heuristics of our approach is invited to go directly to Section~\ref{Sec:RelativeEntropy}. In the
remainder of this section, we will give a precise statement of our main result and some additional references.

\medskip Define the mass and the center of mass of $v_0$ respectively by
\[
M:=\int_{\R^d}v_0\;dy\quad\mbox{and}\quad x_0:=\frac 1M\int_{\R^d}y\,v_0\;dy\;.
\]
Consider the family of the Barenblatt profiles
\be{Eqn:Barenblatt}
B_\sigma(x):=\sigma^{-\frac d2}\(C_M+\tfrac 1\sigma\,|x|^2\)^\frac 1{m-1}\quad\forall\;x\in\R^d
\ee
where $\sigma$ is a positive real parameter and
\[
C_M:=\(\frac M{M_*}\)^{-\frac{2\,(1-m)}{d\,(m-m_c)}}\kern -4pt,\quad M_*:=
\ird{\(1+|x|^2\)^{\frac 1{m-1}}}= \pi^\frac d2\,\frac{\Gamma\big(\frac{d\,(m-m_c)}{2\,(1-m)}\big)}{\Gamma\big(\frac 1{1-m}\big)}\;.
\]
Notice that $\nrm{B_\sigma}1=M$ for any $\sigma>0$ and $B_\sigma$ is a solution of
\[
\nabla\cdot\left[B\(\sigma^{\frac d2(m_c-m)}\,\nabla B^{m-1}-2\,x\)\right]=0\;.
\]
Let us recall the definition of $m_c$, $m_1$, $\widetilde m_1$ and introduce the exponents $m_2$ and $\widetilde m_2$, for later use:
\begin{multline*}
m_c=\frac{d-2}d\;,\quad m_1=\frac{d-1}d\;,\quad\widetilde m_1=\frac d{d+2}\;,
\\m_2:=\frac{d+1}{d+2}\quad\mbox{and}\quad\widetilde m_2:=\frac{d+4}{d+6}\;,
\end{multline*}
which are such that $m_c<m_1<m_2<1$, $m_c<\widetilde m_1<\widetilde m_2$ and,
if $d\ge 2$, $\widetilde m_1\le m_1$ and $\widetilde m_2\le m_2$. For later purpose, it is also convenient to define
\be{Eqn:kappa}
K_M:=\ird{|x|^2\,B_1}=\frac{(1-m)\,\widetilde m_1}{m-\widetilde m_1}\,M\,C_M
\ee
for any $m\in(\widetilde m_1,1)$. Notice that $K_M$ is finite if $m>\widetilde m_1$.

If $v$ is a solution of \eqref{Eqn}, consider the time-dependent scale $R(\tau)$ defined by \be{Eqn:Scale} \frac{d\log
R}{d\tau}=2\(\frac{K_M}{\int_{\R^d}|y-x_0|^2\,v(\tau,y)\;dy}\)^{\frac d2(m-m_c)}\,,\quad R(0)=1\;. \ee
The justification of such a choice for
$\tau\mapsto R(\tau)$ will be made clear  in Section~\ref{Sec:RelativeEntropy}. Note that, in view of the fact that the quantity $\int_{\R^d}|y-x_0|^2\,v(\tau,y)\,dy$ is non-decreasing in time  along the solution of the fast diffusion equation~\eqref{Eqn} and because of its asymptotic behaviour, the time-dependent scale $R(\tau)$ is such that $\log R(\tau)$ is increasing from zero to infinity. We also define $\sigma$ as a function of $\tau$ by the condition
\[
\sigma(\tau):=\frac1{K_M\,R(\tau)^2}\int_{\R^d}|y-x_0|^2\,v(\tau,y)\;dy\;.
\]
As a consequence, the equation for $R$ can be rewritten as
\be{Eqn:sigma}
2\,\sigma^{-\frac d2(m-m_c)}=R^{\,1-d\,(1-m)}\,\frac{dR}{d\tau}
\ee
and we can define for any $x=y/R(t)\in\R^d$ the Barenblatt type solution $\mathcal B$ by
\[
\mathcal B(\tau,y):=\frac 1{R(\tau)^d}\,\mathfrak B\(\tau,\frac y{R(\tau)}\)\quad\mbox{where}\quad\mathfrak B(\tau,x):=B_{\sigma(\tau)}(x)\;.
\]
The difference of $\mathcal B$ with the Barenblatt solution $\mathsf B$ is that $\mathcal B$ depends on $\int_{\R^d}|y|^2\,v(\tau,y)\;dy$, so that
they are only asymptotically equivalent, as we shall see later. The point is that $\mathcal B$ provides a better asymptotic matching than $\mathsf B$. Our goal is
indeed to measure the rate of convergence of $v$ towards~$\mathcal B$. For this purpose, it is convenient to change variables and study the rate
of convergence of~$u$ defined by
\[
u(\tau,x)=R(\tau)^d\, v\(\tau,x_0+R(\tau)\,x\)
\]
towards $\mathfrak B$. Let us consider the \emph{relative entropy} of J. Ralston and W.I. Newmann defined in \cite{MR760591,MR760592} by
\[
\mathcal E(\tau):=\frac 1{m-1}\int_{\R^d}\left[u^m-\mathfrak B^m-m\,\mathfrak B^{m-1}\,(u-\mathfrak B)\right]\,dx\;.
\]
\begin{theorem}\label{Thm:Main} Assume that $m\in(\widetilde m_1,1)$, $d\ge 2$.
Let $v$ be a solution of~\eqref{Eqn} with initial datum $v_0\in L^1_+(\R^d)$ such
that $v_0^m$ and $y\mapsto|y|^2\,v_0(y)$ are integrable. With the above notations, we have
\[
\limsup_{\tau\to\infty}R(\tau)^{\gamma(m)}\,\mathcal E(\tau)<\infty\;,
\]
where $R(\tau)\sim\tau^{\frac 1{d\,(m-m_c)}}$ as $\tau\to\infty$ and
\[
\gamma(m)=\left\{\begin{array}{ll}
\frac{((d-2)\,m-(d-4))^2}{4\,(1-m)}&\mbox{if}\;m\in\(\widetilde m_1,\widetilde m_2\right]\;,\vspace{6pt}\cr
4\,(d+2)\,m-4\,d&\mbox{if}\;m\in\left[\widetilde m_2, m_2\right]\;,\vspace{6pt}\cr
4&\mbox{if}\;m\in[m_2,1)\,.\cr
\end{array}\right.
\]
\end{theorem}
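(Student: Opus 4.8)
The plan is to run the relative entropy / relative Fisher information method of \cite{BBDGV,BDGV} in the non self-similar variables of Section~\ref{Sec:RelativeEntropy}, the crucial new feature being that the extra time dependence carried by $\sigma(\tau)$ does not alter the entropy -- entropy production identity. First I would write down the equation satisfied by $u(\tau,x)=R(\tau)^d\,v(\tau,x_0+R(\tau)\,x)$. Substituting $v(\tau,y)=R^{-d}\,u(\tau,(y-x_0)/R)$ in \eqref{Eqn}, eliminating $\dot R$ by means of \eqref{Eqn:sigma}, using that $B_\sigma$ solves its stationary equation so that $\nabla\mathfrak B^{m-1}=2\,\sigma^{-\frac d2(m-m_c)}\,x$ on $\{\mathfrak B>0\}$, and changing the time variable into $t$ with $dt=R^{\,d(1-m)-2}\,d\tau$, one obtains the standard Fokker--Planck form
\be{Eqn:Rescaled}
\partial_t u=-\nabla\cdot\big(u\,\nabla(u^{m-1}-\mathfrak B^{m-1})\big)\,,\qquad\frac{d\log R}{dt}=2\,\sigma^{-\frac d2(m-m_c)}\,,
\ee
the only difference with the usual setting being that $\mathfrak B=B_{\sigma(t)}$ now moves, with $\nabla\cdot(x\,\mathfrak B)=-2\,\sigma\,\partial_\sigma\mathfrak B$.

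Then I would differentiate $\mathcal E$ along \eqref{Eqn:Rescaled}. The contribution of $\partial_t u$ is the usual $-\tfrac m{1-m}\,\mathcal I$, where $\mathcal I(\tau):=\ird{u\,\big|\nabla(u^{m-1}-\mathfrak B^{m-1})\big|^2}\ge 0$ is the relative Fisher information; the contribution of $\partial_t\mathfrak B=-\tfrac{\dot\sigma}{2\sigma}\,\nabla\cdot(x\,\mathfrak B)$, after an integration by parts and using that $\mathfrak B^{m-1}$ is an \emph{affine function of $|x|^2$}, reduces to a multiple of $\dot\sigma\,\ird{\mathfrak B^{m-1}\,(u-\mathfrak B)}$. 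The point is that this last integral \emph{vanishes identically}: writing $\mathfrak B^{m-1}=a+b\,|x|^2$, it equals $a\,\ird{(u-\mathfrak B)}+b\,\ird{|x|^2(u-\mathfrak B)}$, the first term being zero by conservation of mass, $\nrm u1=\nrm{\mathfrak B}1=M$, while the second is zero because $\ird{|x|^2\,u}=\ird{|x|^2\,\mathfrak B}=K_M\,\sigma$, which is precisely how $\sigma(\tau)$ has been defined. Hence $\tfrac d{dt}\mathcal E=-\tfrac m{1-m}\,\mathcal I$, exactly as in the self-similar case; the same computation also gives $\dot\sigma=-\tfrac{2\,(1-m)^2 d}{m\,K_M}\,\mathcal E\le 0$, so $\sigma(t)$ is nonincreasing.

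Next comes the entropy -- entropy production estimate. In the weighted $L^2$ space in which the linearisation of $\mathcal E$ acts, the identity $\ird{|x|^2(u-\mathfrak B)}=0$ is exactly orthogonality to the eigenfunction $\partial_\sigma\mathfrak B$ that generates the infinitesimal dilations (indeed $\mathfrak B^{m-2}\,\partial_\sigma\mathfrak B$ is again affine in $|x|^2$), and $\ird{x\,(u-\mathfrak B)}=0$ holds since the center of mass of $v$ stays at $x_0$. On the orthogonal complement of $1$, of the coordinate functions and of $|x|^2$, the sharp Hardy--Poincar\'e inequalities of \cite{BBDGV,BDGV} (see also the spectral analysis of Denzler and McCann) bound $\mathcal E$ by a constant times $\mathcal I$, the constant scaling like $\sigma^{-\frac d2(m-m_c)}$; combined with \eqref{Eqn:Rescaled} this gives $\tfrac d{d\log R}\,\mathcal E\le-(\gamma(m)+o(1))\,\mathcal E$ with a \emph{$\sigma$-independent} rate $\gamma(m)$, equal on the three ranges $(\widetilde m_1,\widetilde m_2]$, $[\widetilde m_2,m_2]$, $[m_2,1)$ to, respectively, the bottom of the continuous spectrum of the linearised operator, a discrete eigenvalue, and a constant (the three pieces matching at $\widetilde m_2$ and at $m_2$, so that $\gamma$ is continuous). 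As this inequality holds with the optimal constant only asymptotically, I would argue in two steps: a preliminary algebraic rate is borrowed from \cite{BBDGV} --- which, together with the known large-time behaviour $\int_{\R^d}|y-x_0|^2\,v(\tau,y)\,dy\sim c\,\tau^{2/(d(m-m_c))}$, also yields $R(\tau)\sim\tau^{1/(d(m-m_c))}$ and $\sigma(\tau)\to\sigma_\infty\in(0,\infty)$ --- and then, the remainder in the Hardy--Poincar\'e step being summable in $\log R$ because of that preliminary rate, a Gronwall integration of the differential inequality gives $\limsup_{\tau\to\infty}R(\tau)^{\gamma(m)}\,\mathcal E(\tau)<\infty$.

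The main obstacle will be the rigorous linearisation: one must show that the nonlinear remainders in the Hardy--Poincar\'e step are of lower order than $\mathcal E$, which requires uniform convergence of the relative error $u/\mathfrak B-1$ to $0$ on all of $\R^d$ together with uniform control of its derivatives --- that is, the regularity theory and the global Harnack principle for \eqref{Eqn} used in \cite{BBDGV} --- and a careful bookkeeping of the contributions from the region where $\mathfrak B$ is small; it is precisely this ``tail matching'' that is responsible for the saturation of the rate at $\gamma=4$ once $m\ge m_2$. A more benign point to verify is that $\sigma(\tau)$ stays bounded away from $0$ (so that the change of time $t\leftrightarrow\log R$ is non-degenerate), which follows from $\dot\sigma\le0$, from the a priori bound on $\ird{u^m}$ that precludes concentration, and from the integrability of $\mathcal E$ in the variable $t$ guaranteed by the preliminary rate.
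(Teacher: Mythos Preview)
Your proposal is correct and follows essentially the same route as the paper: rewrite the problem in the rescaled variables, show that the $\partial_t\sigma$ contribution to $\tfrac{d}{dt}\mathcal E$ cancels by the mass and second-moment matching (the paper's Lemma~\ref{Lem:ChoiceOfSigma} and Remark~\ref{Rem1}), exploit the resulting orthogonalities (Lemma~\ref{Lem:OrthogonalityProperties}) in the improved Hardy--Poincar\'e inequality (Corollaries~\ref{Cor:ImprovedHardyPoincare}--\ref{Cor:LinearizedProblem}), linearise via the uniform relative bounds on $u/\mathfrak B$ as in \cite{BBDGV,BDGV}, import a preliminary rate from \cite{BBDGV} to obtain $\sigma\to\sigma_\infty>0$ and $R\sim\tau^{1/(d(m-m_c))}$ (the paper's Lemma~\ref{Lem:Asymptotic}), and conclude by Gronwall. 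One small correction: the saturation at $\gamma=4$ for $m\ge m_2$ is not a tail-matching phenomenon but a purely spectral one --- in Corollary~\ref{Cor:ImprovedHardyPoincare} it is the discrete eigenvalue $\lambda_{20}=-4\alpha$ (second spherical harmonics) that determines $\Lambda_{\alpha,d}$ once $\alpha\le-(d+2)$, and $(1-m)\,\lambda_{20}=4$.
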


Once a relative entropy estimate is known, it is possible to control the decay rate of $u-\mathfrak B$ in various norms, for instance in
$L^q(\R^d,\,dx)$ for
\[
q\ge \max\left\{1,\tfrac{2\,d\,(1-m)}{2\,(2-m)+d\,(1-m)}\right\}\;,
\]
or in $C^k$, by interpolation. Up to a change of variables, this also allows to prove decay rates of $v-\mathcal B$. See \cite{BBDGV} for more details.

Compared to the results of \cite{BDGV}, an improvement for any $m>\widetilde m_1$ has been obtained. The values obtained in \cite{BDGV} for
$\gamma(m)$ are indeed
\[
\gamma(m)=\left\{\begin{array}{ll}
2\,d\,m-2\,(d-2)&\mbox{if}\;m\in\(\widetilde m_1, m_1\right]\;,\vspace{6pt}\cr
2&\mbox{if}\;m\in[m_1,1)\;,\cr
\end{array}\right.
\]
except that in \cite{BDGV} the scale $R(\tau)$ is determined by the self-similar Barenblatt solutions (both scales are anyway equivalent as $\tau\to\infty$: see Lemma~\ref{Lem:Asymptotic}). Also see Figure~2 at the end of this paper for more details on $m\mapsto\gamma(m)$ in the setting of \cite{BDGV} compared to the results of Theorem~\ref{Thm:Main}.

Compared to other methods, it may look surprising that the scale $R(\tau)$ and, as a consequence, the coefficient $\sigma$ both depend on the solution $v$ of~\eqref{Eqn}. Asymptotically, as $\tau\to\infty$, $R(\tau)$ is equivalent to the scale given by the self-similar change of variables, but what has been gained is a better matching with the closest Barenblatt solution. The family of the Barenblatt solutions is globally invariant under scaling and, among all such solutions, there is one which is closer to our solution of the evolution equation: the one with the same second moment.

\medskip Convergence results of a suitably rescaled flow associated to \eqref{Eqn}
 towards an asymptotic profile has been established in \cite{MR586735} for $m>m_c$
 (also see for instance \cite{MR1977429}) and in \cite{BBDGV,Daskalopoulos-Sesum2006}
  for $m\le m_c$.
Getting rates of convergence beyond a simple interpolation between mass and uniform estimates has required the use of the relative entropies
introduced by J. Ralston and W.I. Newman in~\cite{MR760591,MR760592}. First results in this direction have been achieved in \cite{MR1777035} using
the entropy / entropy-production method of D. Bakry and M. Emery (also see \cite{MR1853037} for general diffusions and \cite{MR2065020} for an
overview) and in \cite{MR1940370} using sharp Gagliardo-Nirenberg interpolation inequalities. F.~Otto made the link with gradient flows with
respect to the Wasserstein distance in \cite{MR1842429}, and D. Cordero-Erausquin, B.~Nazaret and C.~Villani gave a proof of the corresponding
Gagliardo-Nirenberg inequalities using mass transportation techniques in \cite{MR2032031}. The condition $m\ge m_1$ was a strong limitation to
these first approaches. Gagliardo-Nirenberg inequalities indeed degenerate
into a critical Sobolev inequality for $m=m_1$, while the displacement convexity condition also requires $m\ge m_1$. For $m<m_1$, various
limitations appear. To work with Wasserstein's distance, it is crucial to have second moments bounded, which amounts to request $m>\widetilde m_1$
for the Barenblatt profiles; see for instance \cite{MR1982656,MR2126633}. Linearization of entropy estimates around the Barenblatt profiles has
been considered in \cite{MR1901093,MR1974458} for $m\in(m_c,1)$. In a certain sense, this is also the strategy in \cite{MR1982656,MR2126633}.
Integrability of the Barenblatt profiles means $m>m_c$. This condition has been removed in a series of recent papers (see
\cite{BBDGV-CRAS,BBDGV,BDGV,BGV}) together with a clarification of the strategy of linearization of the relative entropies, at least from the
point of view of functional inequalities. In this paper, we shall however restrict $m$ to the interval $(\widetilde m_1,1)$, for spectral
reasons that are explained in Section~\ref{Sec:HardyPoincare} and for the second moment to be well defined. For $m\le\widetilde m_1$, even with an appropriate definition a relative second moment, our method gives no improvement on the convergence rates because of the presence of the continuous spectrum.

Rescalings and convergence towards Barenblatt solutions, or \emph{intermediate asymptotics}, has not been the only issue of large time
asymptotics. We can for instance quote \cite{MR2133441} for a study (in the porous media case) of the time evolution of the second moment, and
\cite{MR2255281,MR2126633,MR2211152,MR2344717} for the search of improved convergence rates when moment conditions are imposed in the framework of
Wasserstein's or other Fourier based distances. The question of improved rates has been precisely formulated in \cite{MR2126633}, and solved in
\cite{BDGV} in the weighted $L^2$ framework that we shall use in this paper, as far as the first moment (position of the center of mass) is
concerned. The main contribution of this work is to explain how improvements based on the second moment can also be achieved.

This paper is organized as follows. In Section~\ref{Sec:RelativeEntropy}, we explain how faster convergence results can be achieved by introducing
an appropriate time-dependent rescaling, which is given by \eqref{Eqn:Scale} and not by the explicit dependence of the Barenblatt solutions.
Improved Hardy-Poincar\'e inequalities are established in Section~\ref{Sec:HardyPoincare}, using the spectral results of
\cite{MR1982656,MR2126633} and the spectral equivalence found in \cite{BDGV}. The large time behaviour of the solution is studied in
Section~\ref{Sec:SecondMoment}. The proof of Theorem~\ref{Thm:Main} is then completed in Section~\ref{Sec:Proof}. Further considerations on the
case $d=1$ and the limiting regime as $m\to 1_-$ are presented in the last section.
\newpage

\section{The relative entropy approach}\label{Sec:RelativeEntropy}

The result of Theorem~\ref{Thm:Main} is easy to understand using a time-dependent rescaling and the relative entropy formalism. Define the
function $u$ such that \be{Eqn:Time-dependent} v(\tau,y+x_0)=R^{-d}\,u(t,x)\;,\quad R=R(\tau)\;,\quad t=\tfrac 12\,\log R\;,\quad x=\frac yR \ee
where $v$ is a solution of~\eqref{Eqn} with initial datum $v_0\in L^1_+(\R^d)$. A simple computation shows that $u$ has to be a solution of
\be{Eqn2} \frac{\partial u}{\partial t}+\nabla\cdot\left[u\,\(\sigma^{\frac d2(m-m_c)}\,\nabla u^{m-1}-2\,x\)\right]=0\quad t>0\;,\quad
x\in\R^d\,, \ee with initial datum $u_0=v_0$ (we assume that $R$ is chosen such that $R(0)=1$) and $\sigma$ given by
\[
2\,\sigma^{-\frac d2(m-m_c)}=R^{\,1-d\,(1-m)}\,\frac{dR}{d\tau}\;,
\]
which is nothing else than \eqref{Eqn:sigma}. By virtue of the definition of $R(\tau)$, the new time $t = \frac 12\,\log R(\tau)$ increases monotonically from $0$ to $+\infty$. Consequently, the old and new times can be uniquely related, and $\tau$ can be expressed in terms of $t$ through the inverse function of $R$, so that $\tau = R^{-1}(e^{2t})$. Using this transformation and with
a slight abuse of notations, we shall consider from now on $\sigma$ as a function of $t$. It is
important to notice that, as long as $\frac{d\sigma}{dt}\neq 0$, the Barenblatt profile $B_\sigma$ \emph{is not} a solution of \eqref{Eqn2}, but
we may still consider the relative entropy
\[
\mathcal F_\sigma[u]:=\frac 1{m-1}\int_{\R^d}\left[u^m-B_\sigma^m-m\,B_\sigma^{m-1}\,(u-B_\sigma)\right]\,dx\;.
\]
Let us briefly sketch the strategy of our method before giving all details.

\medskip If we consider a solution of~\eqref{Eqn2} and compute the time derivative of the relative entropy, we find that
\be{Eqn:TwoTerms} \frac d{dt}\mathcal F_{\sigma(t)}[u(t,\cdot)]=\frac{d\sigma}{dt}\(\frac d{d\sigma}\,\mathcal
F_\sigma[u]\)_{|\sigma=\sigma(t)}\kern -5pt+\frac m{m-1}\ird{\(u^{m-1}-B_{\sigma(t)}^{m-1}\)\frac{\partial u}{\partial t}}\;. \ee Here comes the
main difference with previous works. As we shall see below in the proof of Lemma~\ref{Lem:ChoiceOfSigma} (also see Remark~\ref{Rem1}), the first term of the right hand side in
\eqref{Eqn:TwoTerms} involves
\[
\(\frac d{d\sigma}\,\mathcal F_\sigma[u]\)_{|\sigma=\sigma(t)}=\frac{m\,D(\sigma)}{1-m}\(\ird{|x|^2\,B_\sigma}-\ird{|x|^2\,u}\)_{|\sigma=\sigma(t)}
\]
where $D(\sigma):=\frac d{d\sigma}\big(\sigma^{d\,(m_c-m)/2}\big)$. When taking a time-dependent rescaling based on the self-similar variables, one
finds that $\sigma$ is constant in $t$, so that $\frac{d\sigma}{dt}=0$ and the term $\frac{d\sigma}{dt}\,\frac d{d\sigma}\,\big(\mathcal F_\sigma[u]\big)$ does not contribute. In our approach,
$\sigma$ depends on $t$ but can still be chosen so that this term does not show up either. It is indeed enough to require that 
\be{equal-moment}
\ird{|x|^2\,B_\sigma}=\ird{|x|^2\,u}\;, \ee
which amounts to ask that $R$ solves the ordinary differential equation~\eqref{Eqn:Scale}, to obtain that $\frac d{d\sigma}\,\big(\mathcal F_\sigma[u]\big)=0$. This
will be justified in the first step of our method, below (see Lemma~\ref{Lem:ChoiceOfSigma}).

In a second step, we shall use the fact that \be{Eqn:Standard} \frac d{dt}\mathcal
F_{\sigma(t)}[u(t,\cdot)]=-\frac{m\,\sigma(t)^{\frac d2(m-m_c)}}{1-m}\ird{u\left|\nabla\left[u^{m-1}-B_{\sigma(t)}^{m-1}\right]\right|^2}\;. \ee
From there on, the computation goes essentially as in \cite{BBDGV,BDGV}. For completeness, we will briefly reproduce it. However, with our choice
of $\sigma$, we gain an additional orthogonality condition which will be explicitly stated in the third step of the method: see
Lemma~\ref{Lem:OrthogonalityProperties}. This orthogonality condition is the crucial point (see Corollary~\ref{Cor:LinearizedProblem}) for
improving the rates in Theorem~\ref{Thm:Main}, compared to the results of \cite{BDGV}. Now let us give further details.

\subsection*{First step: choice of the scaling parameter}

For a while, we do not need to take into account the dependence of $\sigma$ in~$t$. The main idea of this paper is indeed to choose $\sigma$ in
terms of $u$ by minimizing $\sigma\mapsto\mathcal F_\sigma[u]$, so that
\[
\frac d{d\sigma}\,\mathcal F_\sigma[u]=0\;.
\]
\begin{lemma}\label{Lem:ChoiceOfSigma}
For any given $u\in L^1_+(\R^d)$ such that $u^m$ and $|x|^2\,u$ are both integrable,
if $m\in(\widetilde m_1,1)$, there is a unique $\sigma= {\sigma^*} >0$ which
minimizes $\sigma\mapsto\mathcal F_\sigma[u]$, and it is explicitly given by
\be{min}
{\sigma^*}=\frac 1{K_M}\,\ird{|x|^2\,u}\;.
\ee
For $\sigma=\sigma^*$, the Barenblatt profile $B_{{\sigma^*}}$ satisfies \eqref{equal-moment}.
\end{lemma}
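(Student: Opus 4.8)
The plan is to compute $\frac{d}{d\sigma}\mathcal F_\sigma[u]$ explicitly, show it has a unique zero, and identify that zero via the second-moment condition. First I would differentiate under the integral sign. Since $\frac{1}{m-1}\ird{(u^m - B_\sigma^m - m\,B_\sigma^{m-1}(u-B_\sigma))}$ depends on $\sigma$ only through $B_\sigma$, and the $u^m$ term drops out, one finds after the cancellation of the $\partial_\sigma B_\sigma^m$ terms that
\[
\frac{d}{d\sigma}\mathcal F_\sigma[u] = \frac{m}{m-1}\ird{\big(\partial_\sigma B_\sigma^{m-1}\big)\,(u - B_\sigma)}\,.
\]
The key structural fact I would exploit is that, from \eqref{Eqn:Barenblatt}, $B_\sigma^{m-1}(x) = \sigma^{-d(m-1)/2}\big(C_M + \tfrac1\sigma|x|^2\big)$, which is \emph{affine} in $|x|^2$ with $\sigma$-dependent coefficients. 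Hence $\partial_\sigma B_\sigma^{m-1}$ is again of the form $a(\sigma) + b(\sigma)\,|x|^2$ for explicit functions $a,b$. Plugging this in and using $\nrm{u}1 = \nrm{B_\sigma}1 = M$ (which kills the constant-in-$x$ part), one is left with $b(\sigma)\big(\ird{|x|^2 u} - \ird{|x|^2 B_\sigma}\big)$ up to a nonzero prefactor. This already shows that, for a stationary point, \eqref{equal-moment} must hold.

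Next I would evaluate $\ird{|x|^2 B_\sigma}$ by the scaling $B_\sigma(x) = \sigma^{-d/2} B_1(x/\sqrt\sigma)$ combined with the definition \eqref{Eqn:kappa} of $K_M$: a change of variables $z = x/\sqrt\sigma$ gives $\ird{|x|^2 B_\sigma} = \sigma\,\ird{|z|^2 B_1} = \sigma\,K_M$. Therefore the equation $\frac{d}{d\sigma}\mathcal F_\sigma[u]=0$ is equivalent to $\sigma\,K_M = \ird{|x|^2 u}$, i.e. to $\sigma = \sigma^* := \frac1{K_M}\ird{|x|^2 u}$, proving both the formula \eqref{min} and that $B_{\sigma^*}$ satisfies \eqref{equal-moment}. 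Since $\ird{|x|^2 u}>0$ and $K_M>0$, this $\sigma^*$ is a genuine positive number; finiteness of $K_M$ is exactly the hypothesis $m>\widetilde m_1$, which is where that restriction enters.

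It remains to check that $\sigma^*$ is the unique minimizer rather than merely a critical point. For this I would either compute the sign of $b(\sigma)$ and observe that $\frac{d}{d\sigma}\mathcal F_\sigma[u]$ changes sign from negative to positive as $\sigma$ crosses $\sigma^*$ (because $\ird{|x|^2 B_\sigma} = \sigma K_M$ is strictly increasing in $\sigma$ while $\ird{|x|^2 u}$ is fixed), or equivalently verify directly that $\sigma \mapsto \mathcal F_\sigma[u]$ is strictly convex, or at least unimodal, on $(0,\infty)$ — the monotonicity of the moment of $B_\sigma$ makes the one-dimensional function $\sigma\mapsto\frac{d}{d\sigma}\mathcal F_\sigma[u]$ vanish exactly once with the correct sign pattern. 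A small integrability remark is needed to license differentiation under the integral sign and to ensure $\mathcal F_\sigma[u]$ is finite: this uses $u^m\in L^1$, $|x|^2 u\in L^1$, and the explicit decay of $B_\sigma$. The main obstacle, modest as it is, is the bookkeeping in the first differentiation step — making sure the $B_\sigma^m$ contributions cancel and correctly isolating the affine-in-$|x|^2$ structure of $B_\sigma^{m-1}$; once that is done, everything reduces to the elementary scaling identity $\ird{|x|^2 B_\sigma} = \sigma K_M$.
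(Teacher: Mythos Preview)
Your argument is correct and coincides almost verbatim with the alternative proof the paper records in Remark~\ref{Rem1}: differentiate $\mathcal F_\sigma[u]$, use that $B_\sigma^{m-1}(x)=\sigma^{d(1-m)/2}\big(C_M+|x|^2/\sigma\big)$ is affine in $|x|^2$, kill the constant part by mass conservation, and read off the second-moment condition, which together with the scaling identity $\ird{|x|^2\,B_\sigma}=\sigma\,K_M$ yields \eqref{min}. The paper's \emph{main} proof takes a slightly different but equivalent route: it isolates the $\sigma$-dependent part $e(\sigma)=(1-m)\ird{B_\sigma^m}+m\ird{B_\sigma^{m-1}u}$, evaluates it explicitly as a two-term power sum in $\sigma$, and minimizes that one-variable function; the identity \eqref{Eqn:kappa} is then used to simplify the optimizer to $\sigma^*=K_M^{-1}\ird{|x|^2\,u}$. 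Your approach is arguably cleaner in that it makes the role of \eqref{equal-moment} transparent from the start, while the paper's explicit computation makes the behaviour of $e(\sigma)$ at $0$ and $\infty$ (hence that the unique critical point is a minimum) immediate. One small slip: the overall sign in your displayed formula for $\frac{d}{d\sigma}\mathcal F_\sigma[u]$ should be $\frac{m}{1-m}$ rather than $\frac{m}{m-1}$, but this is immaterial for locating the zero.
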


In \eqref{min}, $K_M$ is the constant which has been defined in \eqref{Eqn:kappa}. The condition $m>\widetilde m_1$ guarantees that $B_\sigma^m$
is integrable and $K_M=\ird{|x|^2\,B_1}$ is finite.
\begin{proof} We have to minimize
\[
e(\sigma):=(1-m)\ird{B_\sigma^m}+m\ird{B_\sigma^{m-1}\,u}\;,
\]
which corresponds to the two $\sigma$-dependent terms in the expression of $\mathcal F_\sigma[u]$. Using the fact that
$B_\sigma^{m-1}(x)=\sigma^{d\,(1-m)/2}(C_M+|x|^2/\sigma)$ and $\ird u=M$, it is easy to see that
\begin{multline*}
e(\sigma)=(1-m)\,\sigma^{\frac d2\,(1-m)}\,\big[M\,C_M+K_M\big]\\+m\,\sigma^{\frac d2\,(1-m)}\,\(M\,C_M+\ird{\frac{|x|^2}\sigma\,u}\)
\end{multline*}
Collecting terms, we get
\[
e(\sigma)=\sigma^{\frac d2\,(1-m)}\,\big[M\,C_M+(1-m)\,K_M\big]\\+m\,\sigma^{\frac d2\,(1-m)-1}\ird {|x|^2\,u}\;.
\]
Hence, by optimizing on $\sigma>0$, we find that $\sigma=\sigma^*$ is given by
\[
\sigma^*=\frac{m\,(m-m_c)}{(1-m)\,\big[M\,C_M+(1-m)\,K_M\big]}\ird {|x|^2\,u}\;.
\]
Using \eqref{Eqn:kappa}, we observe that
\[
\frac{m\,(m-m_c)}{(1-m)\,\big[M\,C_M+(1-m)\,K_M\big]}=\frac 1{K_M}\;,
\]
which proves \eqref{equal-moment} and therefore \eqref{min}.\end{proof}

\begin{remark}\label{Rem1} To prove \eqref{equal-moment} directly, we may notice that ${\sigma^*}$ is determined by the condition
\[
0=e'(\sigma)=m\,(1-m)\ird{B_\sigma^{m-2}\,\frac{dB_\sigma}{d\sigma}\({B_\sigma}- u\)}\;.
\]
Hence we obtain
\[
0 = m\!\ird{\kern -3pt\({B_\sigma}- u\)\,\frac{dB_\sigma^{m-1}}{d\sigma}} =
 m\!\ird{\kern -3pt\({B_\sigma}- u\)\Big(C(\sigma)\,C_M + D(\sigma)\,|x|^2\Big)}\;,
\]
where $C(\sigma) = \frac d{d\sigma}\big(\sigma^{d\,(1-m)/2}\big)$, and $D(\sigma) = \frac d{d\sigma}\big(\sigma^{d\,(1-m)/2-1}\big)$. Taking into
account that both $B_\sigma$ and $u$ have the same mass $M$, we get \eqref{equal-moment}. \end{remark}

The dependence of $\sigma$ in $t$ when $u$ is a solution of~\eqref{Eqn2} has not been taken into account yet. The choice of
Lemma~\ref{Lem:ChoiceOfSigma} determines an ordinary differential equation for $R$ in terms of $\ird{|x|^2\,u(t,x)}$. Undoing the time-dependent
rescaling \eqref{Eqn:Time-dependent}, this equation is exactly \eqref{Eqn:Scale}. With the choice $R(0)=1$, we recall that $u_0=v_0$.

\medskip As already mentioned, the choice of $\sigma$ in Lemma~\ref{Lem:ChoiceOfSigma}
has a major interest. If we consider a solution of~\eqref{Eqn2} and compute the time derivative
of the relative entropy, we find that the first term of the right hand side in \eqref{Eqn:TwoTerms} drops so that
\[
\frac d{dt}\mathcal F_{\sigma(t)}[u(t,\cdot)]=\frac m{m-1}\ird{\(u^{m-1}-B_{\sigma(t)}^{m-1}\)\frac{\partial u}{\partial t}}
\]
and we are back to the usual computations in self-similar variables.

\subsection*{ Second step: the entropy / entropy production estimate}

According to the definition of $B_\sigma$, we know that
\[
2\,x=\sigma^{\frac d2(m-m_c)}\,\nabla B_\sigma^{m-1}\,.
\]
Using \eqref{Eqn2}, we obtain
\begin{multline*}
\frac m{m-1}\ird{\(u^{m-1}-B_{\sigma(t)}^{m-1}\)\frac{\partial u}{\partial t}}\\
=-\frac{m\,\sigma(t)^{\frac d2(m-m_c)}}{1-m}\ird{u\left|\nabla\left[u^{m-1}-B_{\sigma(t)}^{m-1}\right]\right|^2}\;,
\end{multline*}
thus proving \eqref{Eqn:Standard}. Let $w:=u/B_\sigma$ and observe that the relative entropy can be written as
\[
\mathcal F_\sigma[u]=\frac{m}{1-m}\int_{\R^d}\Big[w-1-\frac{1}{m}\big(w^m-1\big)\Big]\,B_\sigma^m\,dx\;.
\]
Define the \emph{relative Fisher information} by
\[
\mathcal I_\sigma[u]:=\int_{\R^d}\Big|\,\frac 1{m-1}\,\nabla\left[(w^{m-1}-1)\,B_\sigma^{m-1}\right]\Big|^2\,B_\sigma\,w\;dx\;.
\]
For a solution $u$ of \eqref{Eqn2}, we find that
\[
\frac d{dt}\mathcal F_{\sigma(t)}[u(t,\cdot)]=-\,m\,(1-m)\,\sigma(t)^{\frac d2(m-m_c)}\,\mathcal I_{\sigma(t)} [u(t,\cdot)]\quad\forall\;t>0\;.
\]
As in \cite[Lemma 3]{BBDGV} (also see \cite{BDGV}) we can estimate from below and above the entropy $\mathcal F_\sigma[u]$~by \be{Eqn:Entropy}
h^{m-2}\int_{\R^d}|f|^2\,B_\sigma^{2-m}\;dx\le \frac 2m\,\mathcal F_\sigma[u]\le h^{2-m}\int_{\R^d}|f|^2\,B_\sigma^{2-m}\;dx \ee where
$f:=(w-1)\,B_\sigma^{m-1}$, $h_1(t):=\mathrm{inf}_{\R^d}w(t,\cdot)$, $h_2(t):=\mathrm{sup}_{\R^d}w(t,\cdot)$ and $h:=\max\{h_2,1/h_1\}$. The fact
that $h$ is bounded for any $t>0$ is easy to prove by the Maximum Principle if $h(0)$ is finite. See for instance \cite{BBDGV} for more details.
Even if $h(0)$ is infinite, $h$ is anyway bounded for any $t>0$, large enough, when $m\in(m_c,1)$; see \cite[Theorem 1.2]{BV}. By \cite[Corollary 1]{BBDGV}, we
also know that \hbox{$\lim_{t\to\infty}h(t)=1$}.

According to \cite[Lemma 7]{BBDGV} (also see \cite{BDGV}) , the generalized Fisher information satisfies the bounds
\be{Eqn:Fisher}
\int_{\R^d}|\nabla f|^2\,B_\sigma\;dx\le [1+X(h)]\,\mathcal I_\sigma[u]+Y(h)\int_{\R^d}|f|^2\,B_\sigma^{2-m}\;dx
\ee
where\begin{eqnarray*}
&&h_1^{-1}\,h_2^{2\,(2-m)}\le h^{5-2m}=:1+X(h)\;,\\
&&d\,(1-m)\left[(\tfrac{h_2}{h_1})^{2\,(2-m)}-1\right]\le d\,(1-m)\,\big[h^{4\,(2-m)}-1\big]=:Y(h)\;.
\end{eqnarray*}Notice that $X(1)=Y(1)=0$.

\subsection*{Third step: orthogonality conditions}
To obtain decay rates of $t\mapsto\mathcal F_{\sigma(t)}[u(t,\cdot)]$ is now reduced to establish a relation between
$\int_{\R^d}|f|^2\,B_\sigma^{2-m}\;dx$ and $\int_{\R^d}|\nabla f|^2\,B_\sigma\;dx$. This is the purpose of the next section, but before let us
make a few additional observations on the properties of $f=B_\sigma^{m-2}\,u-B_\sigma^{m-1}$.
\begin{lemma}\label{Lem:OrthogonalityProperties}  Let $u$ be a solution of \eqref{Eqn2} and $f=B_\sigma^{m-1}\,(u/B_\sigma-1)$
where $\sigma=\sigma(t)$ is defined by \eqref{Eqn:sigma}. With these notations, the function $f$ has the following properties, for any $t>0$ :
\begin{enumerate}
\item[(i)] \emph{Mass conservation:} $\ird{f(t,x)\,B_\sigma^{2-m}}=0$ if $m>m_c$.
\item[(ii)] Position of the \emph{center of mass:} $\ird{x\,f(t,x)\,B_\sigma^{2-m}}=0$ if $m>(d-1)/(d+1)$.
\item[(iii)] Conservation of the \emph{second moment:} $\ird{|x|^2\,f(t,x)\,B_\sigma^{2-m}}=0$ if $m>\widetilde m_1$.
\end{enumerate}
\end{lemma}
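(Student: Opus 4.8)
The plan is to prove the three orthogonality identities in turn, since each one just amounts to unwinding the definition of $f$ and using a conservation law for the solution of \eqref{Eqn2}. Observe first that $f\,B_\sigma^{2-m}=B_\sigma^{2-m}\,(u/B_\sigma-1)\,B_\sigma^{m-1}=u-B_\sigma$, so in all three cases the claimed identity is simply $\ird{\phi\,(u-B_\sigma)}=0$ with $\phi\in\{1,x,|x|^2\}$; the weight $B_\sigma^{2-m}$ disappears and we only need to compare the corresponding moments of $u(t,\cdot)$ and of $B_{\sigma(t)}$. So the three parts reduce to: (i) $\ird u=\ird{B_\sigma}$, (ii) $\ird{x\,u}=\ird{x\,B_\sigma}$, and (iii) $\ird{|x|^2\,u}=\ird{|x|^2\,B_\sigma}$.

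For (i), I would note that $\ird u=M$ is conserved in time for solutions of \eqref{Eqn2} — it follows by integrating \eqref{Eqn2} over $\R^d$, the flux term being a divergence — and that $\ird{B_\sigma}=M$ for every $\sigma>0$ by construction (stated right after \eqref{Eqn:Barenblatt}). The finiteness/integrability needed here is exactly $m>m_c$, which guarantees $B_\sigma\in L^1$. For (ii), I would first use that $B_\sigma$ is radial, so $\ird{x\,B_\sigma}=0$; hence it remains to show $\ird{x\,u(t,x)}=0$ for all $t>0$. Here I would invoke the fact — recalled in the introduction and used in \cite{BDGV} — that the center of mass of the original solution $v$ is conserved, and we have translated coordinates precisely so that it sits at the origin: in \eqref{Eqn:Time-dependent} we set $v(\tau,y+x_0)=R^{-d}u(t,x)$ with $x=y/R$, so $\ird{x\,u(t,x)}=R^{-d-1}\int_{\R^d}(y)\,v(\tau,y+x_0)\,dy\cdot R^{d} = R^{-1}\big(\tfrac1M\int(y-x_0)v(\tau,y)\,dy\big)M$, which vanishes once we know $\int y\,v(\tau,y)\,dy=M x_0$ for all $\tau$; that conservation of first moment holds provided the first moment is finite, which (for the Barenblatt tail, hence by comparison for $u$) needs $m>(d-1)/(d+1)=m_2'$. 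I would make this moment-computation precise, being a little careful that the weighted quantities are integrable so the computation with \eqref{Eqn2} is legitimate.

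Part (iii) is the one that really uses the construction of this paper, and it is essentially immediate given Lemma~\ref{Lem:ChoiceOfSigma}: the scaling parameter $\sigma(t)$ was defined in \eqref{Eqn:sigma} (equivalently through \eqref{min} / \eqref{Eqn:Scale}) exactly so that $\sigma(t)=\sigma^*(u(t,\cdot))=\frac1{K_M}\ird{|x|^2\,u(t,x)}$, and Lemma~\ref{Lem:ChoiceOfSigma} states that for $\sigma=\sigma^*$ one has $\ird{|x|^2\,B_\sigma}=\ird{|x|^2\,u}$, i.e.\ \eqref{equal-moment}. Thus (iii) holds for every $t>0$ by the very choice of $\sigma$, and the hypothesis $m>\widetilde m_1$ is exactly what makes $K_M$ finite and the second moments of both $u$ and $B_\sigma$ well defined.

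The main thing to be careful about — the only place there is anything to do beyond bookkeeping — is justifying that the relevant weighted quantities ($\int|y|^2 v$, $\int y\,v$) are actually finite and propagate in time along the flow \eqref{Eqn}, so that the conservation-law computations (integration by parts against $1$, $x$, $|x|^2$) are valid and the moments of $u(t,\cdot)$ are finite for all $t>0$. For the second moment this is the monotonicity/finiteness statement already quoted in the introduction (second moment non-decreasing and finite under the running hypotheses, cf.\ \cite{MR2133441} and the discussion around \eqref{Eqn:Scale}); for the first moment it is the standard conservation of center of mass under the (translation-invariant) equation \eqref{Eqn}, valid as soon as the first moment of $v_0$ is finite, which is implied by $\int|y|^2 v_0<\infty$ together with $v_0\in L^1$. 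Once these integrability facts are in hand, each of (i)–(iii) is a one-line identification of moments, with the stated threshold on $m$ being precisely the integrability condition for the corresponding weight against the Barenblatt profile.
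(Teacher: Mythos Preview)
Your proposal is correct and follows essentially the same approach as the paper: you make explicit the observation $f\,B_\sigma^{2-m}=u-B_\sigma$ and then verify that the zeroth, first, and second moments of $u$ and $B_\sigma$ agree via mass conservation, conservation of the center of mass (after the translation by $x_0$), and the defining property \eqref{equal-moment} of $\sigma$, respectively. The paper's proof is terser but identical in substance; your additional remarks on the integrability thresholds are fine and simply flesh out what the paper leaves implicit.
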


Notice that Property (ii) has already been used in \cite[Theorem 7]{BDGV} to obtain improved rates of convergence for $m\in(m_1,1)$. Property
(iii) is new and arises from the fact that the change of variables \eqref{Eqn:Time-dependent} is chosen in Lemma~\ref{Lem:ChoiceOfSigma} by
imposing a condition on the moment and not according to the self-similar variables corresponding to the Barenblatt solutions. Restrictions on $m$
are such that mass, first and second moments are well defined for $B_\sigma$. As in~\cite{BDGV}, such conditions could be relaxed by considering
moment conditions on~$f$ only, or relative moment conditions; we will however not pursue in this direction, as such an approach does not improve
on the rates of convergence.

\begin{proof} It is straightforward to rewrite the conservation of mass along evolution into Property (i).

Equation \eqref{Eqn} being independent of $y$, it is clear that $\int_{\R^d}y\,v(\tau,y)\;dy=\int_{\R^d}y\,v_0(y)\;dy$ for any $\tau>0$. As a
consequence, the \emph{center of mass} of $u$ is located at $x=0$: $\ird{x\,u(t,x)}=0$, and so we get Property (ii).

Finally, Property (iii) is a direct consequence of \eqref{equal-moment}. \end{proof}

\section{Improved Hardy-Poincar\'e inequalities}\label{Sec:HardyPoincare}

When $M=M^*$ and $\sigma=1$, with the notations of Section~\ref{Sec:Intro}, the quantities $\int_{\R^d}|f|^2\,B_\sigma^{2-m}\;dx$ and
$\int_{\R^d}|\nabla f|^2\,B_\sigma\;dx$ involve various powers of $(1+|x|^2)$. On~$\R^d$, we shall therefore consider the measure
$d\mu_\alpha:={\mu}_\alpha\,dx$, where the weight ${\mu}_\alpha$ is defined by ${\mu}_\alpha(x):= (1+|x|^2)^{\alpha}$, with $\alpha=1/(m-1)<0$, and study the
operator
\[
\mathcal L_{\alpha,d}:=-{\mu}_{1-\alpha}\,\mathrm{div}\left[\,{\mu}_\alpha\,\nabla\cdot\,\right]
\]
on the weighted space $L^2\,(d\mu_{\alpha-1})$. The operator $\mathcal L_{\alpha,d}$ is such that
\[
\int_{\R^d}f\,(\mathcal L_{\alpha,d}\,f)\,d\mu_{\alpha-1}=\int_{\R^d}|\nabla f|^2\,d\mu_\alpha\;.
\]

Notice that in the range $m\in(m_c,1)$, that is for $\alpha\in(-\infty,-d/2)$, $d\mu_\alpha$ is a bounded positive measure. If additionally
$m>\widetilde m_1$, that is for $\alpha<-(d+2)/2$, then $d\mu_{\alpha-1}$ is a bounded measure and $\int_{\R^d}|x|^2\,d\mu_\alpha$ is also finite.

Let $\alpha_*:=-(d-2)/2$. Based on \cite{MR1982656,MR2126633,BBDGV-CRAS,BDGV}, we have the following result.
\begin{proposition}\label{Prop:Spectrum} The bottom of the continuous spectrum of
the operator $\mathcal L_{\alpha,d}$ on $L^2\,(d\mu_{\alpha-1})$ is $\lambda_{\alpha,d}^{\rm cont}:=(\alpha-\alpha_*)^2$.
Moreover, $\mathcal L_{\alpha,d}$ has some discrete spectrum only for $\alpha<\alpha_*$.
For $d\ge 2$, the discrete spectrum is made of the eigenvalues
\[
\lambda_{\ell k}=-2\,\alpha\,\(\ell+2\,k\)-4\,k\(k+\ell+\tfrac d2-1\)
\]
with $\ell$, $k=0$, $1$, \ldots provided $(\ell,k)\neq(0,0)$ and $\ell+2k-1<-(d+2\,\alpha)/2$. If $d=1$,
the discrete spectrum is made of the eigenvalues $\lambda_k=k\,(1-2\,\alpha-k)$ with $k\in{\mathbb N}\cap[1,1/2-\alpha]$.\end{proposition}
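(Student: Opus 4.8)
\sketch The plan is to separate variables and reduce everything to one-dimensional singular Sturm--Liouville problems. Writing $x=r\,\omega$ with $r=|x|>0$ and $\omega\in S^{d-1}$, and decomposing $L^2(d\mu_{\alpha-1})$ over the spaces $\mathcal H_\ell$ of spherical harmonics of degree $\ell\ge0$ (on which the Laplace--Beltrami operator acts as multiplication by $\ell(\ell+d-2)$), the rotational invariance of $\mathcal L_{\alpha,d}$ gives an orthogonal decomposition into the radial operators
\[
\mathcal L_{\alpha,d}^{(\ell)}g=-(1+r^2)^{1-\alpha}\,r^{1-d}\,\big(r^{d-1}(1+r^2)^{\alpha}\,g'\big)'+\ell(\ell+d-2)\,\tfrac{1+r^2}{r^2}\,g
\]
on $L^2\big(\R^+,\,r^{d-1}(1+r^2)^{\alpha-1}\,dr\big)$, the spectrum of $\mathcal L_{\alpha,d}$ being the closure of the union over $\ell$ of the spectra of the $\mathcal L_{\alpha,d}^{(\ell)}$. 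At $r=0$ this equation is regular singular with indicial exponents $\ell$ and $2-d-\ell$; the self-adjoint realization is the one attached to the Dirichlet form $\int_{\R^d}|\nabla f|^2\,d\mu_\alpha$, which near $r=0$ keeps the exponent $\ell$ (for $d=1$ the origin causes no trouble: one simply splits into even and odd functions, $\ell\in\{0,1\}$).

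To locate the continuous spectrum I would study the solutions of $\mathcal L_{\alpha,d}^{(\ell)}g=\lambda\,g$ near $r=+\infty$. There the equation is asymptotically an Euler equation with indicial roots $-(\alpha-\alpha_*)\pm\sqrt{(\alpha-\alpha_*)^2-\lambda+\ell(\ell+d-2)}$ (using that $d-2+2\alpha=2(\alpha-\alpha_*)$), and after the Liouville change of variable $s=\mathrm{arcsinh}\,r$ the operator $\mathcal L_{\alpha,d}^{(\ell)}$ differs from a constant-coefficient operator by a relatively compact perturbation. Hence for $\lambda<(\alpha-\alpha_*)^2+\ell(\ell+d-2)$ the two solutions are non-oscillating powers of $r$ and exactly one of them is square integrable at infinity, so $\mathcal L_{\alpha,d}^{(\ell)}$ is limit point there; its essential spectrum is $[(\alpha-\alpha_*)^2+\ell(\ell+d-2),\infty)$, with no singular continuous part. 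Taking the union over $\ell$ and noting that $\ell(\ell+d-2)$ is minimal, equal to $0$, at $\ell=0$, the bottom of the continuous spectrum of $\mathcal L_{\alpha,d}$ is $(\alpha-\alpha_*)^2$, attained in the radial sector.

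For the point spectrum I would exploit the clean form $\mathcal L_{\alpha,d}f=-(1+|x|^2)\,\Delta f-2\alpha\,x\cdot\nabla f$, from which it is immediate that $\mathcal L_{\alpha,d}$ maps the finite-dimensional space $\mathcal P_n$ of polynomials of degree $\le n$ into itself. Grading $\mathcal P_n$ by the harmonic decomposition of homogeneous polynomials, the induced endomorphism is triangular with diagonal entries $-2\alpha(\ell+2k)-4k(k+\ell+\tfrac d2-1)$ on the component $|x|^{2k}\mathcal H_\ell$; hence each such value, call it $\lambda_{\ell k}$, is an eigenvalue, with a polynomial eigenfunction of the form (a degree-$\ell$ spherical harmonic) times (a degree-$k$ polynomial in $|x|^2$) --- for instance $\mathcal L_{\alpha,d}\,x_i=-2\alpha\,x_i$ and $\mathcal L_{\alpha,d}\big(|x|^2+\tfrac d{2\alpha+d}\big)=-(4\alpha+2d)\big(|x|^2+\tfrac d{2\alpha+d}\big)$, giving $\lambda_{1,0}=-2\alpha$ and $\lambda_{0,1}=-4\alpha-2d$. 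Such a polynomial lies in $L^2(d\mu_{\alpha-1})$ if and only if $\int_{\R^d}|x|^{2(\ell+2k)}(1+|x|^2)^{\alpha-1}\,dx<\infty$, that is, if and only if $\ell+2k-1<-(d+2\alpha)/2$; in particular the constant $(\ell,k)=(0,0)$, of eigenvalue $0$, is in $L^2(d\mu_{\alpha-1})$ exactly when $\alpha<\alpha_*$, and no $(\ell,k)$ can meet the condition when $\alpha\ge\alpha_*$, which is why $\mathcal L_{\alpha,d}$ has discrete spectrum only in that range. That these are \emph{all} the eigenvalues below the continuous spectrum is then checked from the one-dimensional theory: after the substitution $z=|x|^2/(1+|x|^2)\in(0,1)$ the equation $\mathcal L_{\alpha,d}^{(\ell)}g=\lambda\,g$ becomes hypergeometric, and the solution which is $L^2$ at infinity also carries the exponent $\ell$ at the origin precisely when the relevant series terminates, i.e.\ when $\lambda=\lambda_{\ell k}$. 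For $d=1$ there are no non-trivial spherical harmonics and the same triangularity on polynomials of a fixed degree $n$ gives the eigenvalue $n(1-2\alpha-n)$, square integrable if and only if $n<\tfrac12-\alpha$, which is the family $\lambda_k=k(1-2\alpha-k)$, $k\in\N\cap[1,\tfrac12-\alpha]$.

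The hard part is the endpoint analysis behind the first two steps: deciding, for each $\ell$ and each $d$, whether $r=0$ and $r=\infty$ are in the limit-point case for the chosen self-adjoint realization (and pinning down the boundary condition when they are not, as may happen at $r=0$ in low dimension), showing that the essential spectrum is exactly $[(\alpha-\alpha_*)^2+\ell(\ell+d-2),\infty)$ with no embedded singular continuous spectrum, and proving completeness, i.e.\ that the polynomial solutions exhaust the point spectrum below the continuous spectrum. These are precisely the Sturm--Liouville estimates carried out in \cite{MR1982656,MR2126633}; combined with the reformulations of \cite{BBDGV-CRAS} and the spectral equivalence of \cite{BDGV}, they give the statement as written, so that the proof here essentially amounts to assembling these results.
\finprf
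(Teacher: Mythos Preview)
Your sketch is correct and, in fact, goes well beyond what the paper does: Proposition~\ref{Prop:Spectrum} is stated there without proof, simply as a consequence of \cite{MR1982656,MR2126633,BBDGV-CRAS,BDGV}. Your outline (spherical-harmonic decomposition, asymptotic/Liouville analysis at infinity for the essential spectrum, triangularity of $\mathcal L_{\alpha,d}=-(1+|x|^2)\Delta-2\alpha\,x\cdot\nabla$ on polynomials for the point spectrum, and the hypergeometric reduction for completeness) is precisely the route followed in the Denzler--McCann papers \cite{MR1982656,MR2126633}, transported to the present weighted-$L^2$ setting via the equivalence in \cite{BDGV}; the explicit checks you give ($\lambda_{1,0}=-2\alpha$, $\lambda_{0,1}=-4\alpha-2d$, the integrability threshold $\ell+2k-1<-(d+2\alpha)/2$, and the $d=1$ formula) are all correct.
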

Let $\alpha_*:=-(d-2)/2$. The following result has been established in \cite{BBDGV}.
\begin{corollary}[Sharp Hardy-Poincar\'e inequalities]\label{Cor:SharpHardyPoincare}
Let $d\ge 2$. For any $\alpha\in(-\infty,\alpha_*)\cup(\alpha_*,0)$, there is a positive constant $\Lambda_{\alpha,d}$ such that
\be{gap}
\Lambda_{\alpha,d}\int_{\R^d}|f|^2\,d\mu_{\alpha-1}\leq \int_{\R^d}|\nabla f|^2\,d\mu_\alpha\quad\forall\;f\in L^2\,(d\mu_{\alpha-1})
\ee
under the additional condition $\int_{\R^d}f\,d\mu_{\alpha-1}=0$ if $\alpha<\alpha_*$. Moreover, for $d\ge 3$, the sharp constant $\Lambda_{\alpha,d}$ is given by
\[
\Lambda_{\alpha,d}=\left\{\begin{array}{ll}
\frac 14\,(d-2+2\,\alpha)^2&\mbox{if}\;\alpha\in\left[-\frac{d+2}{2},\alpha_*\right)\cup(\alpha_*,0)\;,\vspace{6pt}\cr
-\,4\,\alpha-2\,d&\mbox{if}\;\alpha\in\left[-d,-\frac{d+2}{2}\right)\;,\vspace{6pt}\cr
-\,2\,\alpha&\mbox{if}\;\alpha\in(-\infty,-d)\,.\cr
\end{array}\right.
\]
For $d=2$, Inequality \eqref{gap} holds for all $\alpha<0$, with the corresponding values of the best constant $\Lambda_{\alpha,2}=\alpha^2$ for
$\alpha\in [-2,0)$ and $\Lambda_{\alpha,2}=-2\,\alpha$ for $\alpha\in (-\infty,-2)$. For $d=1$, \eqref{gap} holds, but the values of
$\Lambda_{\alpha,1}$ are given by $\Lambda_{\alpha,1}=-2\,\alpha$ if $\alpha<-1/2$ and $\Lambda_{\alpha,1}=(\alpha-1/2)^2$ if $\alpha\in[-1/2,0)$.
\end{corollary}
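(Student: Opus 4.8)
The plan is to read off the corollary from the spectral picture of Proposition~\ref{Prop:Spectrum} through the variational characterisation of the bottom of the spectrum. Since $\int_{\R^d}f\,(\mathcal L_{\alpha,d}\,f)\,d\mu_{\alpha-1}=\int_{\R^d}|\nabla f|^2\,d\mu_\alpha$, the largest constant $\Lambda$ for which \eqref{gap} holds is exactly the infimum of the Rayleigh quotient $\int_{\R^d}|\nabla f|^2\,d\mu_\alpha\big/\int_{\R^d}|f|^2\,d\mu_{\alpha-1}$ over the form domain of $\mathcal L_{\alpha,d}$ on $L^2(d\mu_{\alpha-1})$, that is, $\Lambda_{\alpha,d}=\inf\mathrm{spec}(\mathcal L_{\alpha,d})$. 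When $\alpha<\alpha_*$, the measure $d\mu_{\alpha-1}$ is finite, the constant functions belong to $L^2(d\mu_{\alpha-1})$ and span $\ker\mathcal L_{\alpha,d}$, that is the eigenvalue $\lambda_{00}=0$, so \eqref{gap} can hold with a positive constant only under the additional condition $\int_{\R^d}f\,d\mu_{\alpha-1}=0$, and then $\Lambda_{\alpha,d}$ is the smallest point of $\mathrm{spec}(\mathcal L_{\alpha,d})$ lying above $0$. When $\alpha>\alpha_*$, the constants are not in $L^2(d\mu_{\alpha-1})$ and no condition is needed; the value $\alpha=\alpha_*$ is excluded precisely because there $\inf\mathrm{spec}(\mathcal L_{\alpha,d})=(\alpha-\alpha_*)^2=0$ and there is no discrete spectrum to rescue the inequality.

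By Proposition~\ref{Prop:Spectrum}, for $d\ge2$ this (possibly restricted) bottom of the spectrum equals
\[
\Lambda_{\alpha,d}=\min\Big\{(\alpha-\alpha_*)^2,\ \min\big\{\lambda_{\ell k}:(\ell,k)\ne(0,0),\ \ell+2k-1<-(d+2\alpha)/2\big\}\Big\}\,,
\]
with $(\alpha-\alpha_*)^2=\tfrac14(d-2+2\alpha)^2$ and with the convention that the inner minimum is $+\infty$ when no admissible pair $(\ell,k)$ exists. Positivity is automatic: $(\alpha-\alpha_*)^2>0$ for $\alpha\ne\alpha_*$, and every admissible $\lambda_{\ell k}$ with $(\ell,k)\ne(0,0)$ is positive because $\alpha<0$. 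It remains to evaluate the right-hand side, which is elementary algebra. The admissibility condition forces $\ell+2k\ge2$, hence an admissible eigenvalue exists only when $-(d+2\alpha)/2>0$, i.e. $\alpha<-d/2$; and a short monotonicity inspection of $(\ell,k)\mapsto\lambda_{\ell k}$ --- for fixed $\ell+2k$ the eigenvalue decreases in $k$, and the values with $\ell+2k\ge3$ are directly dominated by the two leading ones in the admissible range --- shows that the smallest admissible eigenvalue is either $\lambda_{1,0}=-2\,\alpha$ (admissible iff $\alpha<-d/2$) or $\lambda_{0,1}=-4\,\alpha-2\,d$ (admissible iff $\alpha<-(d+2)/2$), and that $\lambda_{0,1}\le\lambda_{1,0}$ exactly when $\alpha\ge-d$.

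Comparing the three candidates rests on the identities
\[
(\alpha-\alpha_*)^2-(-2\,\alpha)=\alpha^2+d\,\alpha+\tfrac14(d-2)^2\,,\qquad (\alpha-\alpha_*)^2-(-4\,\alpha-2\,d)=\Big(\alpha+\tfrac{d+2}{2}\Big)^2\ge0\,.
\]
The first quantity is negative precisely on $\big(-\tfrac{d}{2}-\sqrt{d-1},\,-\tfrac{d}{2}+\sqrt{d-1}\big)$, an interval that for $d\ge3$ contains $\big[-\tfrac{d+2}{2},\alpha_*\big)$ and is disjoint from $(-\infty,-d)$. Splitting $(-\infty,\alpha_*)\cup(\alpha_*,0)$ into $(\alpha_*,0)$, $\big[-\tfrac{d+2}{2},\alpha_*\big)$, $\big[-d,-\tfrac{d+2}{2}\big)$ and $(-\infty,-d)$ and inserting these comparisons gives, for $d\ge3$, the value $\tfrac14(d-2+2\alpha)^2$ on the first two ranges, $-4\,\alpha-2\,d$ on the third and $-2\,\alpha$ on the fourth, which is exactly the claim; the continuity of $\Lambda_{\alpha,d}$ at $\alpha=-\tfrac{d+2}{2}$ and at $\alpha=-d$ gives a consistency check. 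The case $d=2$ is the boundary specialisation, with $\alpha_*=0$ and the range $\big[-d,-\tfrac{d+2}{2}\big)$ empty, and the case $d=1$ is handled the same way from the one-parameter family $\lambda_k=k\,(1-2\,\alpha-k)$, $k\in\N\cap[1,\tfrac{1}{2}-\alpha]$, of Proposition~\ref{Prop:Spectrum}, using $(\alpha-\tfrac{1}{2})^2-(-2\,\alpha)=(\alpha+\tfrac{1}{2})^2\ge0$ to conclude that $\lambda_1=-2\,\alpha$ governs the gap for $\alpha<-\tfrac{1}{2}$ and $(\alpha-\tfrac{1}{2})^2$ does for $\alpha\in[-\tfrac{1}{2},0)$.

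All the mathematical content is concentrated in Proposition~\ref{Prop:Spectrum}; once it is granted, the corollary is bookkeeping, which is why it is simply quoted from \cite{BBDGV}. The main obstacle, were one to give a self-contained proof, is exactly that spectral analysis. One decomposes $L^2(d\mu_{\alpha-1})$ into spherical-harmonic sectors, which reduces $\mathcal L_{\alpha,d}$ to a family of singular one-dimensional Sturm--Liouville operators indexed by the harmonic degree $\ell$; after a change of the radial variable, each becomes a hypergeometric (Jacobi-type) operator whose $L^2(d\mu_{\alpha-1})$ eigenfunctions are $|x|^\ell$ times a polynomial of degree $k$ in $|x|^2$, and the requirement that such a function be square-integrable against $d\mu_{\alpha-1}$ near infinity is precisely the inequality $\ell+2k-1<-(d+2\alpha)/2$; meanwhile the asymptotics at infinity of the non-$L^2$ generalised eigenfunctions fix the onset $(\alpha-\alpha_*)^2$ of the continuous spectrum. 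This is what is carried out in \cite{MR1982656,MR2126633,BBDGV-CRAS,BDGV}.
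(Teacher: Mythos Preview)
Your proposal is correct and matches the paper's own treatment: the paper does not give a proof but simply records that the result ``has been established in \cite{BBDGV}'' and then remarks, in the sentence following the statement, that $\Lambda_{\alpha,d}$ is the spectral gap, equal to $\min\{\lambda_{1,0},\lambda_{0,1},\lambda_{\alpha,d}^{\rm cont}\}$ --- exactly the computation you carry out from Proposition~\ref{Prop:Spectrum}. One small slip: where you write ``the admissibility condition forces $\ell+2k\ge2$'' you mean $\ell+2k\ge1$ (attained at $(\ell,k)=(1,0)$); your next clause ``$-(d+2\alpha)/2>0$, i.e.\ $\alpha<-d/2$'' is already consistent with the correct bound, so the typo does not propagate.
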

The constant $\Lambda_{\alpha,d}$ is determined by the spectral gap and corresponds either to the lowest positive eigenvalue, $\lambda_{1,0}$ or $\lambda_{0,1}$, or to the bottom of the continuous spectrum, $\lambda_{\alpha,d}^{\rm cont}:=\frac 14(d+2\,\alpha-2)^2$ (see Fig.~1).\medskip

With additional orthogonality conditions, one improves on the spectral gap in the range for which discrete spectrum exists. A first result in this
direction has been achieved in \cite{BDGV} for solutions with center of mass at the origin. Here we give a refined version of it, by going to the
next order, that is, by considering functions with zero moments up to order two.
\begin{corollary}[Improved Hardy-Poincar\'e inequalities]\label{Cor:ImprovedHardyPoincare}
Under the assumptions of Corollary~\ref{Cor:SharpHardyPoincare}, if $\alpha<-(d+2)/2$ and $f\in L^2\,(d\mu_{\alpha-1})$ is such that
\[
\int_{\R^d}f\,d\mu_{\alpha-1}=0\;,\quad\int_{\R^d}x\,f\,d\mu_{\alpha-1}=0\quad\mbox{and}\quad\int_{\R^d}|x|^2\,f\,d\mu_{\alpha-1}=0\;,
\]
then \eqref{gap} holds for any $d\ge 3$ with
\[
\Lambda_{\alpha,d}=\left\{\begin{array}{ll}
\frac 14\,(d-2+2\,\alpha)^2&\mbox{if}\;\alpha\in\left[-\frac{d+6}2,-\frac{d+2}2\right)\;,\vspace{6pt}\cr
-\,8\,\alpha-4\,(d+2)&\mbox{if}\;\alpha\in\left[-(d+2)\,,-\frac{d+6}2\right]\;,\vspace{6pt}\cr
-\,4\,\alpha&\mbox{if}\;\alpha\in(-\infty,-(d+2)]\,.\cr
\end{array}\right.
\]
For $d=2$, $\Lambda_{\alpha,2}=\alpha^2$ for $\alpha\in [-4,-2)$ and $\Lambda_{\alpha,2}=-4\,\alpha$ for $\alpha\in (-\infty,-4)$.
\end{corollary}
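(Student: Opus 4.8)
The plan is to follow the variational and spectral scheme already used for Corollary~\ref{Cor:SharpHardyPoincare} in \cite{BBDGV,BDGV}, inserting one additional orthogonality layer. First I would recast \eqref{gap} as a lower bound for the bottom of the spectrum of the self-adjoint operator $\mathcal L_{\alpha,d}$ on $L^2(d\mu_{\alpha-1})$, restricted to the closed subspace $W$ of functions satisfying the three moment conditions. The point is that these conditions say exactly that $f$ is $L^2(d\mu_{\alpha-1})$-orthogonal to $\mathrm{span}\{1,x_1,\dots,x_d,|x|^2\}$, and that this $(d+2)$-dimensional space is precisely the sum of the eigenspaces of $\mathcal L_{\alpha,d}$ attached to the modes $(\ell,k)=(0,0)$ (the constants, which span the kernel), $(\ell,k)=(1,0)$ (the $d$ coordinate functions $x_i$), and $(\ell,k)=(0,1)$ (the ``dilation'' eigenfunction, which is an affine function of $|x|^2$; here $\alpha<-(d+2)/2$ is precisely the condition guaranteeing that this eigenfunction lies in $L^2(d\mu_{\alpha-1})$). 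Since $W$ is the orthogonal complement of a sum of eigenspaces, it reduces $\mathcal L_{\alpha,d}$, so $\Lambda_{\alpha,d}=\inf\mathrm{spec}\big(\mathcal L_{\alpha,d}|_W\big)$.

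Then I would evaluate that infimum by means of Proposition~\ref{Prop:Spectrum}: it equals $\min\{\lambda_{\alpha,d}^{\rm cont},\mu\}$, where $\mu$ is the smallest eigenvalue $\lambda_{\ell k}$ with $(\ell,k)\notin\{(0,0),(1,0),(0,1)\}$ that lies strictly below $\lambda_{\alpha,d}^{\rm cont}$ (with the convention $\mu=+\infty$ if there is none — eigenvalues above $\lambda_{\alpha,d}^{\rm cont}$ are embedded and cannot lower the infimum). At each level $\ell+2k=n$ the smallest eigenvalue is the ``most radial'' one, $\lambda_{0,n/2}$ for $n$ even and $\lambda_{1,(n-1)/2}$ for $n$ odd, so the competing values are $\lambda_{2,0}=-4\alpha$, $\lambda_{1,1}=-6\alpha-2(d+2)$, and the radial family $\lambda_{0,j}=-4j\alpha-2j\,(d-2+2j)$ with $j\ge2$. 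A finite set of elementary comparisons, which use that a mode $(1,1)$, $(0,j)$ or any mixed mode belongs to $L^2(d\mu_{\alpha-1})$ (equivalently lies below $\lambda_{\alpha,d}^{\rm cont}$) only when $\alpha$ is correspondingly negative, shows that this minimum is always realized either by $\lambda_{2,0}$ or by $\lambda_{0,2}$. Comparing these two with $\lambda_{\alpha,d}^{\rm cont}=\tfrac14(d-2+2\alpha)^2$ produces exactly the two thresholds of the statement: $\lambda_{0,2}=\lambda_{\alpha,d}^{\rm cont}$ at $\alpha=-(d+6)/2$ — the value at which $(0,2)$ enters $L^2(d\mu_{\alpha-1})$ and emerges from the continuous spectrum — and $\lambda_{0,2}=\lambda_{2,0}$ at $\alpha=-(d+2)$. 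This gives the three branches for $d\ge3$; the case $d=2$ is handled by the identical computation with $\alpha_*=0$ and $\lambda_{\alpha,2}^{\rm cont}=\alpha^2$, where the middle interval degenerates to the single point $\alpha=-4$ and only the two branches $\Lambda_{\alpha,2}=\alpha^2$ and $\Lambda_{\alpha,2}=-4\alpha$ survive.

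There remains the sharpness of each branch. On the branches $\Lambda_{\alpha,d}=\lambda_{2,0}$ and $\Lambda_{\alpha,d}=\lambda_{0,2}$, the infimum is attained by the corresponding eigenfunction, which lies in $W$ because it is orthogonal to all lower-level eigenfunctions, in particular to $1$, the $x_i$ and $|x|^2$; hence the constant is optimal. On the branch $\Lambda_{\alpha,d}=\lambda_{\alpha,d}^{\rm cont}$, one takes a Weyl sequence $(g_n)$ for $\mathcal L_{\alpha,d}$ at $\lambda_{\alpha,d}^{\rm cont}$, which may be chosen to concentrate at infinity so that $g_n\rightharpoonup0$, and replaces it by $f_n:=g_n-\Pi g_n$, where $\Pi$ is the $L^2(d\mu_{\alpha-1})$-orthogonal, finite-rank projection onto $\mathrm{span}\{1,x_1,\dots,x_d,|x|^2\}$; then $f_n\in W$, and since $\Pi$ is compact and $1,x_1,\dots,x_d,|x|^2$ together with their gradients lie in the relevant weighted $L^2$ spaces (using again $\alpha<-(d+2)/2$), the correction is negligible on both sides of \eqref{gap}, so the Rayleigh quotient of $f_n$ still tends to $\lambda_{\alpha,d}^{\rm cont}$. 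The final step — passing from the spectral inequality on the form domain to all $f\in L^2(d\mu_{\alpha-1})$ with $\int_{\R^d}|\nabla f|^2\,d\mu_\alpha<\infty$ — is the routine density argument of \cite{BBDGV}.

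The step I expect to require the most care is the bookkeeping in the second paragraph: checking, sub-interval by sub-interval in $\alpha$, that no higher radial mode $\lambda_{0,j}$ with $j\ge3$ or mixed mode undercuts the claimed value, and — hand in hand with this — the handling of the continuous spectrum, in particular the sharpness argument near $\alpha=-(d+6)/2$, where the extremal configuration switches from a Weyl sequence concentrating at infinity to the genuine eigenfunction associated with $\lambda_{0,2}$.
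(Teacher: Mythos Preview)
Your proposal is correct and follows essentially the same spectral route as the paper: the paper's entire ``proof'' is the one-sentence remark that $\Lambda_{\alpha,d}$ is determined by the lowest remaining eigenvalue or by $\lambda_{\alpha,d}^{\rm cont}$, once the modes $(1,0)$ and $(0,1)$ have been projected out. Your write-up simply unpacks this, and in fact does so more carefully than the paper: you correctly identify the middle branch $-8\alpha-4(d+2)$ as $\lambda_{0,2}$, whereas the paper's text attributes it to $\lambda_{1,1}=-6\alpha-2(d+2)$, which does not match the stated formula (for $\alpha\in(-(d+2),-(d+6)/2]$ one has $\lambda_{0,2}<\lambda_{1,1}<\lambda_{2,0}$, so the competing eigenvalues are really $\lambda_{2,0}$ and $\lambda_{0,2}$, as you say).
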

The constant $\Lambda_{\alpha,d}$ is now determined either by the lowest of the two eigenvalues, $\lambda_{1,1}$ or $\lambda_{2,0}$, or by the bottom of the continuous spectrum, $\lambda_{\alpha,d}^{\rm cont}$ (see Fig.~1), since the components corresponding to the eigenspaces associated to $\lambda_{1,0}$ and $\lambda_{0,1}$ have been removed.\medskip

\begin{figure}[ht]\begin{center}\includegraphics[height=10cm]{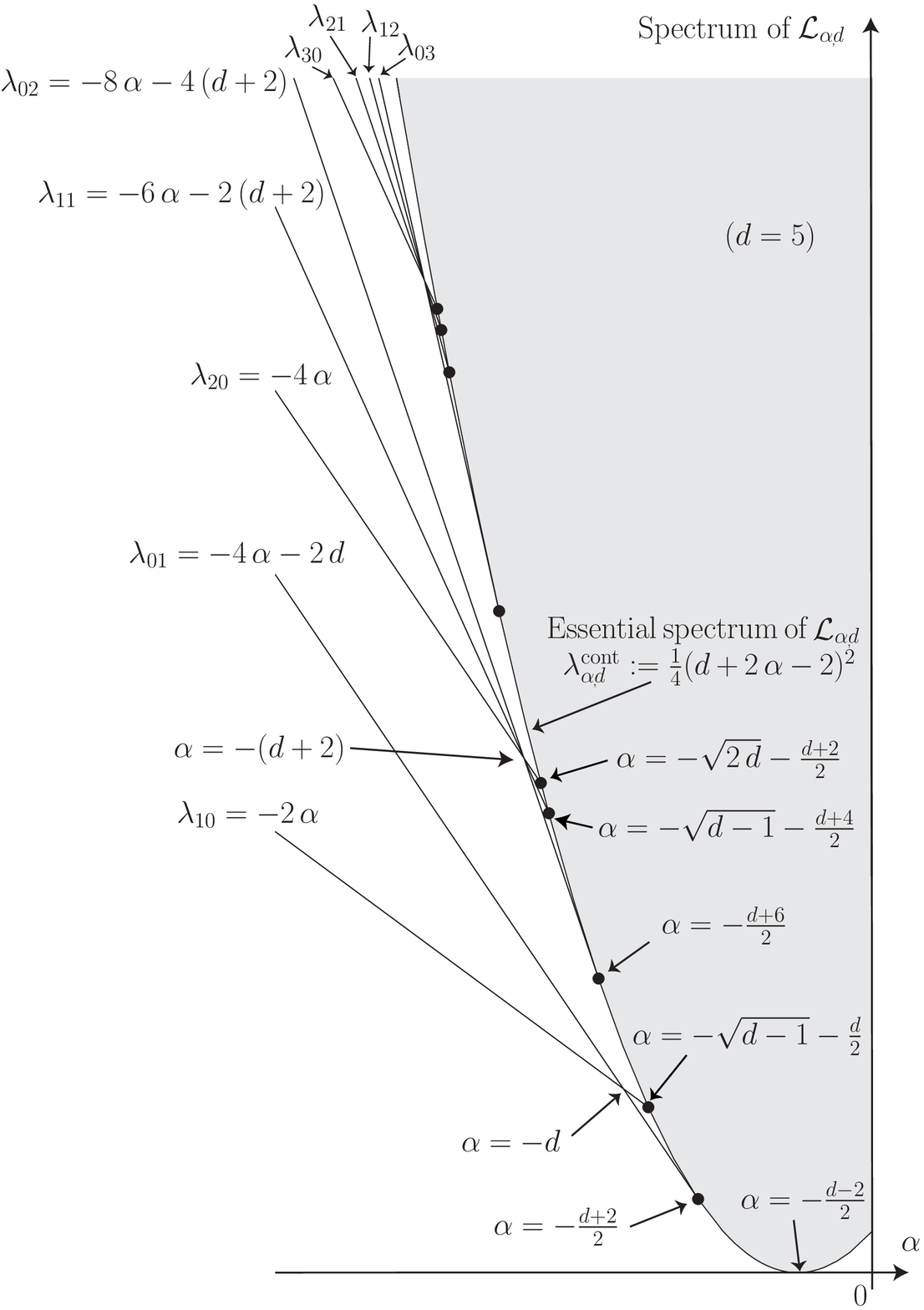}\hspace*{-6.1cm}\includegraphics[height=10.25cm]
{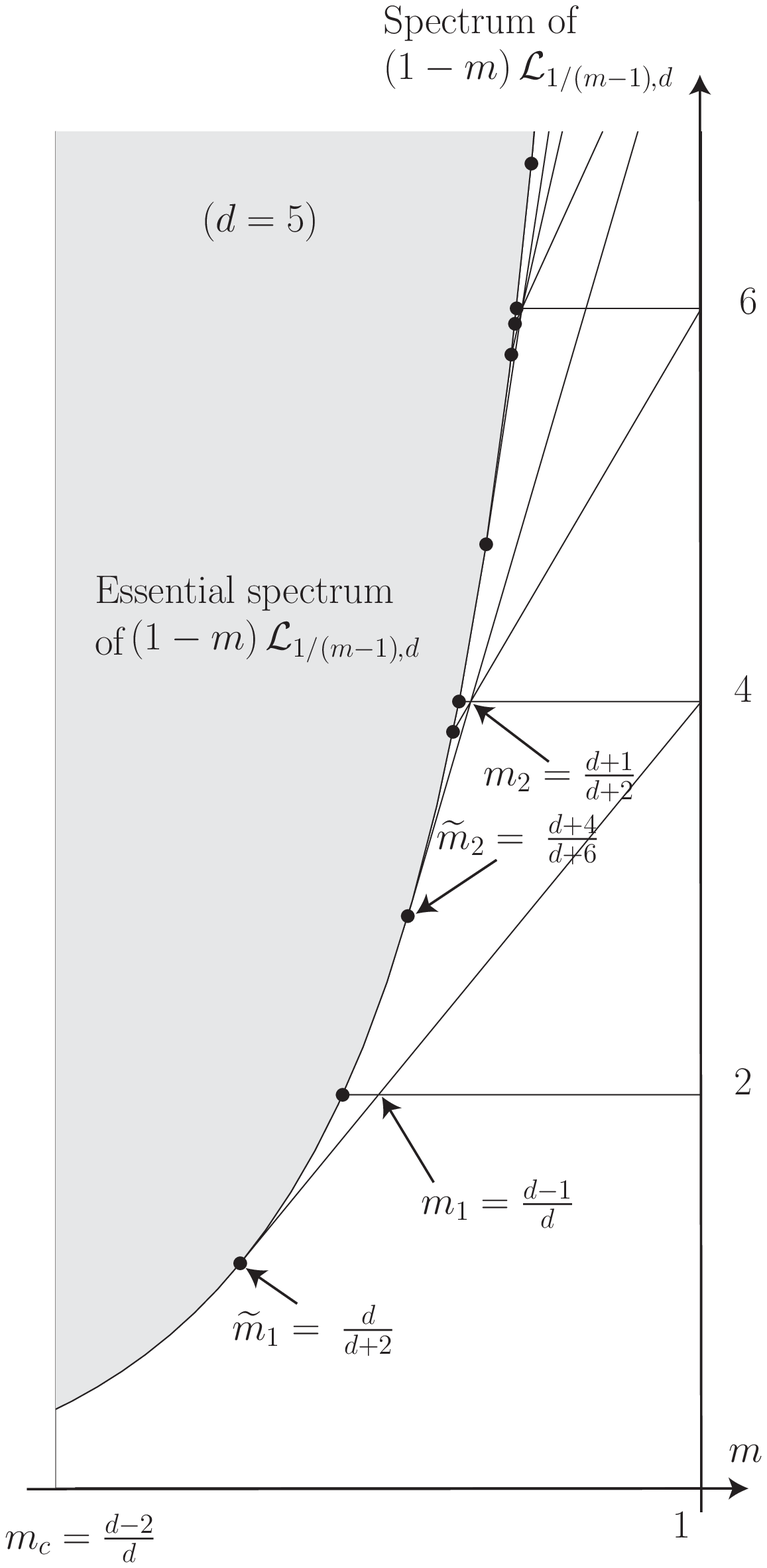}\caption{Spectrum of $\mathcal L_{\alpha,d}$ as a function of $\alpha$ (left),
and spectrum of $(1-m)\,\mathcal L_{1/(m-1),d}$ as a function of~$m$ (right), for $d=5$.}\end{center}\end{figure}

A crucial observation is that we can scale Inequality \eqref{gap}.
\begin{corollary}\label{Cor:LinearizedProblem}
Let $M>0$ and $d\ge2$. With $B_\sigma$ defined by \eqref{Eqn:Barenblatt}, let $f$ be a function in $L^2(B_\sigma^{2-m}\,dx)$ such that
\[
\ird{(1,x,|x|^2)\,f\,B_\sigma^{2-m}}=(0,0,0)\quad\mbox{and}\quad \nabla f\in L^2(B_\sigma\,dx)\;.
\]
Then the inequality
\[
\Lambda_{\alpha,d}\int_{\R^d}|f|^2\,B_\sigma^{2-m}\;dx\le\sigma^{\frac d2(m-m_c)}\,\int_{\R^d}|\nabla f|^2\,B_\sigma\;dx
\]
holds with $\Lambda_{\alpha,d}$ as in Corollary~\ref{Cor:ImprovedHardyPoincare}. \end{corollary}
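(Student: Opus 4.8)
The plan is to read the desired inequality as a scaling consequence of Corollary~\ref{Cor:ImprovedHardyPoincare}: a single affine change of variables normalizes $B_\sigma$ to the model weight $\mu_\alpha$, with $\alpha=1/(m-1)$, after which the stated inequality is exactly \eqref{gap} up to explicit powers of $\sigma$ and of $C_M$. The powers of $C_M$ will cancel entirely, which is why the resulting constant $\Lambda_{\alpha,d}$ depends only on $d$ and on $\alpha$, and not on $M$.

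First I would record two elementary identities. Since $\alpha=1/(m-1)$ we have $\alpha-1=(2-m)/(m-1)=\alpha\,(2-m)$, and, together with $B_\sigma^{m-1}(x)=\sigma^{d(1-m)/2}\,(C_M+|x|^2/\sigma)$, this gives
\[
B_\sigma(x)=\sigma^{-d/2}\,(C_M+|x|^2/\sigma)^{\alpha},\qquad B_\sigma^{2-m}(x)=\sigma^{-d(2-m)/2}\,(C_M+|x|^2/\sigma)^{\alpha-1};
\]
secondly, $\tfrac d2\,(m-m_c)=\tfrac{d\,(m-1)+2}2=1-\tfrac d2\,(1-m)$, which is the exponent of $\sigma$ that should come out at the end. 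Then I would perform the substitution $x=\sqrt{\sigma\,C_M}\,z$, for which $C_M+|x|^2/\sigma=C_M\,(1+|z|^2)$ and $dx=(\sigma C_M)^{d/2}\,dz$, and set $g(z):=f(\sqrt{\sigma C_M}\,z)$, so that $|\nabla_x f|^2=(\sigma C_M)^{-1}\,|\nabla_z g|^2$. A direct computation using the two identities above then yields
\[
\int_{\R^d}|f|^2\,B_\sigma^{2-m}\,dx=\sigma^{-\frac d2(1-m)}\,C_M^{\,\alpha-1+\frac d2}\int_{\R^d}|g|^2\,d\mu_{\alpha-1},\qquad
\int_{\R^d}|\nabla f|^2\,B_\sigma\,dx=\sigma^{-1}\,C_M^{\,\alpha-1+\frac d2}\int_{\R^d}|\nabla g|^2\,d\mu_{\alpha}\,.
\]
Because the substitution is linear, the hypotheses $\int_{\R^d}(1,x,|x|^2)\,f\,B_\sigma^{2-m}\,dx=(0,0,0)$ are equivalent, up to the nonzero factors $1$, $\sqrt{\sigma C_M}$ and $\sigma C_M$, to $\int_{\R^d}(1,z,|z|^2)\,g\,d\mu_{\alpha-1}=(0,0,0)$, and $f\in L^2(B_\sigma^{2-m}\,dx)$, $\nabla f\in L^2(B_\sigma\,dx)$ become $g\in L^2(d\mu_{\alpha-1})$, $\nabla g\in L^2(d\mu_\alpha)$. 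Since $d\ge2$ and the range $m>\widetilde m_1$ is precisely $\alpha<-(d+2)/2$, Corollary~\ref{Cor:ImprovedHardyPoincare} applies to $g$ and gives $\Lambda_{\alpha,d}\int_{\R^d}|g|^2\,d\mu_{\alpha-1}\le\int_{\R^d}|\nabla g|^2\,d\mu_\alpha$. Inserting the two displayed formulas, the common factor $C_M^{\,\alpha-1+d/2}$ drops out, and dividing through by $\sigma^{d(1-m)/2}$ leaves $\Lambda_{\alpha,d}\int_{\R^d}|f|^2\,B_\sigma^{2-m}\,dx\le\sigma^{\,1-d(1-m)/2}\int_{\R^d}|\nabla f|^2\,B_\sigma\,dx$, which is the claim by the second identity above.

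I do not expect a genuine obstacle here; the statement is a pure scaling corollary of Corollary~\ref{Cor:ImprovedHardyPoincare}. The only point that has to be handled carefully is the exponent bookkeeping — in particular the identities $\alpha\,(2-m)=\alpha-1$ and $\tfrac d2\,(m-m_c)=1-\tfrac d2\,(1-m)$ — together with the verification that each of the three moment conditions transforms with a nonvanishing constant, so that none of the orthogonality constraints is weakened in passing from $f$ to $g$.
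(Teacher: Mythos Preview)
Your proof is correct and follows the same route as the paper: both arguments reduce the inequality to Corollary~\ref{Cor:ImprovedHardyPoincare} via the change of variables $x=\sqrt{C_M\,\sigma}\,z$, and the paper's proof is essentially a two-line sketch of what you have written out in full. Your explicit bookkeeping of the exponents (in particular the identities $\alpha\,(2-m)=\alpha-1$ and $\tfrac d2(m-m_c)=1-\tfrac d2(1-m)$) and the verification that the three orthogonality conditions transfer unchanged are exactly the details the paper omits.
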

\begin{proof} The proof relies on a simple change of variables. From Corollary~\ref{Cor:ImprovedHardyPoincare}, we know that
\[
\Lambda_{\alpha,d}\int_{\R^d}|g|^2\,\(1+|x|^2\)^{\alpha-1}\,dx\le\int_{\R^d}|\nabla g|^2\,\(1+|x|^2\)^\alpha\,dx
\]
for any $g\in L^2\,(d\mu_{\alpha-1})$ satisfying the conditions of Corollary~\ref{Cor:ImprovedHardyPoincare}. Then
Corollary~\ref{Cor:LinearizedProblem} holds for $f$ such that $f(x)=g(x/\sqrt{C_M\,\sigma}\,)$ for any $x\in\R^d$, which concludes the
proof.\end{proof}

\section{Estimates on the second moment}\label{Sec:SecondMoment}

Up to now, we have not determined the behavior of $R(\tau)$ as $\tau\to\infty$, nor the fact that $\sigma$ has a finite, positive limit as
$t\to\infty$. These properties can be deduced for instance from \cite{BBDGV}. For the convenience of the reader, let us give some details. As in
\cite{BBDGV}, consider the standard change of variables
\[
(\tau,y)\mapsto\(t=\tfrac 12\,\log R_0(\tau),x=\tfrac y{R_0(\tau)}\)
\]
based on the self-similar behavior of the Barenblatt solution $\mathsf B(\tau,y)$, where
\[
R_0(\tau):=\(1+2\,d\,(m-m_c)\,\tau\)^\frac 1{d\,(m-m_c)}\;.
\]
With the above change of variables, if $v$ is a solution of \eqref{Eqn}, then the function $U(t,x):=v(\tau,y)$ solves
\[
\frac{\partial U}{\partial t}+\nabla\cdot\left[U\,\(\nabla U^{m-1}-2\,x\)\right]=0\quad t>0\;,\quad x\in\R^d\,,
\]
which is nothing else than \eqref{Eqn2}, except that here $\sigma$ is replaced by $1$. It has been established in \cite{BBDGV} that
\[
\lim_{t\to\infty}e^{2\,\mathsf c\,t}\,\mathcal F_1[U(t,\cdot)]=0
\]
for some positive constant $\mathsf c$. By H\"older's inequality, we know that
\begin{multline*}
\ird{U^m}=\ird{\(U\,B_1^{m-1}\)^m\,B_1^{m\,(1-m)}}\\
\le\(\ird{U\,B_1^{m-1}}\)^m\,\(\ird{B_1^m}\)^{1-m}\,,
\end{multline*}
thus proving that
\begin{multline*}
\mathcal F_1[U]\ge\ird{B_1^m}+\frac m{1-m}\ird{U\,B_1^{m-1}}\\
-\frac 1{1-m}\(\ird{U\,B_1^{m-1}}\)^m\,\(\ird{B_1^m}\)^{1-m}\,.
\end{multline*}
With
\[
\mathsf k(t):=\frac{\ird{U\,B_1^{m-1}}}{ \ird{B_1^{m}}}=\frac{\ird{|x|^2\,U}+M\,C_M}{K_M+M\,C_M}\;,
\]
this can be rewritten as
\[
\mathcal F_1[U]\ge\ird{B_1^m}\,\frac{\mathsf k^m-1-m\,(\mathsf k-1)}{m-1}\sim|\mathsf k(t)-1|^2\quad\mbox{as}\;t\to\infty\;,
\]
hence showing
\[
\lim_{t\to\infty}e^{\mathsf c\,t}\,|\mathsf k(t)-1|=0\;.
\]
As a consequence, we observe that
\[
\left|\ird{|x|^2\,U(t,x)}-K_M\right|=O\big(e^{-\mathsf c\,t}\big)\quad\mbox{as}\;t\to\infty\;,
\]
Undoing the change of variables, we find that
\[
\left|\frac 1{R_0(\tau)^2}\int_{\R^d}|y|^2\,v(\tau,y)\;dy-K_M\right|=O\big(R_0(\tau)^{-\mathsf c/2}\big)\quad\mbox{as}\;\tau\to\infty\;,
\]
which, by definition of $\sigma$, gives
\[
\sigma=\frac 1{R(\tau)^2}\,\(1+2\,d\,(m-m_c)\,\tau\)^\frac{2}{d\,(m-m_c)}\left(1+O\big(R_0(\tau)^{-\mathsf c/2}\big)\right)
\]
as $\tau\to\infty$, where $\tau\mapsto R(\tau)$ is given by \eqref{Eqn:Scale}. Using \eqref{Eqn:sigma}, this means that
\[
\frac 1{R(\tau)}\,\frac{dR}{d\tau}=2\,(R_0(\tau)^2\,\sigma)^{-\frac d2(m-m_c)}=\frac 1{d\,(m-m_c)}\,\frac 1\tau\,\left(1+O\big(\tau^{-\varepsilon}\big)\right)
\]
with $\varepsilon=\min\{1,\mathsf c/2\}>0$. With these estimates, we can prove the following result.
\begin{lemma}\label{Lem:Asymptotic} With the notations of Sections~\ref{Sec:Intro} and \ref{Sec:RelativeEntropy},
\[
R(\tau)\sim\tau^{\frac 1{d\,(m-m_c)}}\quad\mbox{as}\;\tau\to\infty
\]
and, as a function of $t$, $t\mapsto\sigma(t)$ is positive, decreasing, with
\[
\lim_{t\to\infty}\sigma(t)=:\sigma_\infty>0\;.
\]\end{lemma}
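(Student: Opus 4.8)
The plan is to establish the two assertions separately, both of them flowing directly from the asymptotic estimates derived just before the statement. For the asymptotics of $R$, I start from the relation
\[
\frac 1{R(\tau)}\,\frac{dR}{d\tau}=\frac 1{d\,(m-m_c)}\,\frac 1\tau\,\left(1+O\big(\tau^{-\varepsilon}\big)\right)\qquad\mbox{as }\tau\to\infty\,,
\]
with $\varepsilon=\min\{1,\mathsf c/2\}>0$, which was obtained above using \eqref{Eqn:sigma} together with the control on the second moment of $U$. Integrating this ODE on $[\tau_0,\tau]$ for $\tau_0$ large gives
\[
\log R(\tau)=\frac 1{d\,(m-m_c)}\,\log\tau+C+o(1)\,,
\]
where the constant $C$ is finite because the error term $O(\tau^{-\varepsilon})/\tau$ is integrable at infinity. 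Exponentiating yields $R(\tau)\sim e^{C}\,\tau^{1/(d\,(m-m_c))}$; absorbing $e^{C}$ into the equivalence (or noting that the normalization $R(0)=1$ together with monotonicity only fixes $R$ up to such a multiplicative constant, which is irrelevant for the $\sim$ statement and for Theorem~\ref{Thm:Main}) gives the first claim.

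For the monotonicity of $\sigma$, I would argue directly from the definition
\[
\sigma(\tau)=\frac1{K_M\,R(\tau)^2}\int_{\R^d}|y-x_0|^2\,v(\tau,y)\;dy\,,
\]
combined with \eqref{Eqn:sigma}. Writing $S(\tau):=\int_{\R^d}|y-x_0|^2\,v(\tau,y)\,dy$ and differentiating $\log\sigma=-2\log R+\log S-\log K_M$ in $\tau$, one gets
\[
\frac 1\sigma\,\frac{d\sigma}{d\tau}=-\frac2R\,\frac{dR}{d\tau}+\frac1S\,\frac{dS}{d\tau}
=-4\,R^{-d\,(1-m)}\,\sigma^{-\frac d2(m-m_c)}+\frac1S\,\frac{dS}{d\tau}\,.
\]
On the other hand, by the definition \eqref{Eqn:Scale} of $R$ one has $\frac 1R\frac{dR}{d\tau}=2\,(K_M/S)^{\frac d2(m-m_c)}$, and it is classical (quoted in the introduction, following \cite{MR2133441,BBDGV}) that for a solution of \eqref{Eqn} the second moment $S$ satisfies an explicit identity of the form $\frac{dS}{d\tau}=c_{d,m}\,\big(\int_{\R^d}v^m\big)$ for a positive constant, so one can compare the two terms. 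It is cleaner, though, to pass to the variable $t=\tfrac12\log R$ and use the relative entropy: in the rescaled variables $\sigma^*(t)=\frac1{K_M}\int_{\R^d}|x|^2\,u(t,x)\,dx$ by Lemma~\ref{Lem:ChoiceOfSigma}, and since along \eqref{Eqn2} the relative entropy $\mathcal F_{\sigma(t)}[u(t,\cdot)]$ is non-increasing while, by \eqref{Eqn:Entropy} and the fact that $\mathcal F_\sigma[u]$ is to leading order $\frac m2\,K_M\,C_M^{-?}|\sigma/\sigma_\infty-1|^2$-like, larger second moment corresponds to larger entropy when $\sigma>\sigma_\infty$, the monotone decay of the entropy forces $t\mapsto\sigma(t)$ to decrease. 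The cleanest route, and the one I would actually write, is to invoke directly the computation $\frac{d}{dt}\mathcal F_{\sigma(t)}[u(t,\cdot)]\le0$ from \eqref{Eqn:Standard} together with $\big(\frac{d}{d\sigma}\mathcal F_\sigma[u]\big)_{\sigma=\sigma^*}=0$ and the strict convexity of $\sigma\mapsto\mathcal F_\sigma[u]$ at its minimum (established in the proof of Lemma~\ref{Lem:ChoiceOfSigma}, where $e''(\sigma^*)>0$): differentiating the identity $\big(\partial_\sigma\mathcal F\big)(\sigma^*(t),u(t))=0$ in $t$ and using $\partial_t\mathcal F\le 0$ gives a sign on $\frac{d\sigma^*}{dt}$.

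Finally, for the existence of the limit $\sigma_\infty>0$: from the estimate displayed before the statement,
\[
\left|\frac 1{R_0(\tau)^2}\int_{\R^d}|y-x_0|^2\,v(\tau,y)\;dy-K_M\right|=O\big(R_0(\tau)^{-\mathsf c/2}\big)\,,
\]
so $S(\tau)/R_0(\tau)^2\to K_M$; combining with $R(\tau)\sim\tau^{1/(d(m-m_c))}$ and $R_0(\tau)^2\sim(2d(m-m_c))^{2/(d(m-m_c))}\tau^{2/(d(m-m_c))}$ from just proved asymptotics, the ratio $R(\tau)^2/R_0(\tau)^2$ tends to a finite positive constant, whence
\[
\sigma(\tau)=\frac{S(\tau)}{K_M\,R(\tau)^2}=\frac{S(\tau)}{K_M\,R_0(\tau)^2}\cdot\frac{R_0(\tau)^2}{R(\tau)^2}
\longrightarrow 1\cdot\Big(\lim\tfrac{R_0^2}{R^2}\Big)=:\sigma_\infty\in(0,\infty)\,.
\]
Monotonicity then guarantees the limit is attained from above. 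I expect the only genuine subtlety to be the proof of monotonicity, i.e. getting the correct sign of $\frac{d\sigma}{dt}$: the argument via the decay of the relative entropy and the strict convexity of $\sigma\mapsto\mathcal F_\sigma[u]$ at $\sigma^*$ is conceptually clean but requires care in checking that $\partial_\sigma\partial_t\mathcal F$ does not interfere, so I would double-check it against the explicit second-moment identity $\frac{dS}{d\tau}=2\,d\,(1-m)\int_{\R^d}v^m\,dy$ as a sanity check; everything else is a routine integration of the ODE and bookkeeping of equivalences.
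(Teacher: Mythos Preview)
Your treatment of the asymptotics of $R$ (integrating the ODE for $\log R$) and of the existence of a positive limit $\sigma_\infty$ is correct and matches the paper's reasoning, which simply says these are direct consequences of the estimates preceding the lemma.

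The gap is in the monotonicity of $\sigma$. Your ``cleanest route'' --- differentiating the identity $\partial_\sigma\mathcal F(\sigma^*(t),u(t))=0$ in $t$ and invoking the decay of $\mathcal F$ together with strict convexity in $\sigma$ --- does not yield a sign on $d\sigma^*/dt$. Differentiating gives
\[
\partial_\sigma^2\mathcal F\cdot\frac{d\sigma^*}{dt}+\partial_t\partial_\sigma\mathcal F=0\,,
\]
and the cross term $\partial_t\partial_\sigma\mathcal F$ is a different linear functional of $\partial_t u$ than the entropy production $\partial_t\mathcal F$; the sign of the latter says nothing about the former. Abstractly, if $G(\sigma,t)=(\sigma-a(t))^2+b(t)$ with $b$ decreasing, the minimizer $\sigma^*(t)=a(t)$ is completely unconstrained. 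Your heuristic that ``larger second moment corresponds to larger entropy'' fails for the same reason: $\mathcal F_{\sigma(t)}[u(t,\cdot)]$ depends on both $\sigma$ and $u$, and its decrease does not force $\sigma$ to decrease.

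The paper instead computes $d\sigma/dt$ directly. From $\sigma=\frac1{K_M}\int_{\R^d}|x|^2\,u\,dx$ and \eqref{Eqn2} one obtains
\[
\frac{d\sigma}{dt}=\frac{2\,d\,(1-m)}{m\,K_M}\,\sigma^{\frac d2(m-m_c)}\int_{\R^d}u^m\,dx-4\,\sigma\,.
\]
The key step --- absent from your outline --- is that the equal-moment condition of Lemma~\ref{Lem:ChoiceOfSigma} implies $\int_{\R^d}B_\sigma^{m-1}u\,dx=\int_{\R^d}B_\sigma^m\,dx$, so the relative entropy collapses to $\mathcal F_\sigma[u]=\frac1{m-1}\int_{\R^d}(u^m-B_\sigma^m)\,dx$. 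Evaluating $\int_{\R^d}B_\sigma^m\,dx$ explicitly and eliminating $\int_{\R^d}u^m\,dx$ between the two identities yields
\[
\frac{d\sigma}{dt}=-\,\frac{2\,d\,(1-m)^2}{m\,K_M}\,\sigma^{\frac d2(m-m_c)}\,\mathcal F_{\sigma(t)}[u(t,\cdot)]\le 0\,.
\]
Thus monotonicity of $\sigma$ is equivalent to \emph{nonnegativity} of the relative entropy, not to its decay. Your ``sanity check'' via the second-moment identity is in fact the correct starting point; what is missing is the algebraic link between $\int_{\R^d}u^m\,dx$ and $\mathcal F_\sigma[u]$ furnished by the moment matching.
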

More precisely, we know that for some $C_\infty>0$, 
\[
R(\tau)=C_\infty\,\tau^{\frac 1{d\,(m-m_c)}}\left(1+o(1)\right)\quad\mbox{as}\;\tau\to\infty
\]
and $\sigma_\infty={(2\,d\,(m-m_c))^{\frac 2{d\,(m-m_c)}}}/{C_\infty^2}$. However, the value of $\sigma_\infty$ in terms of $v_0$ is not known.
\begin{proof}
The asymptotic behaviors of $R$ and $\sigma$ are direct consequences of the above computations. We only have to prove the monotonicity of
$\sigma$. According to Lemma~\ref{Lem:ChoiceOfSigma}, we know that
\[
\sigma(t)=\frac 1{K_M}\ird{|x|^2\,u(t,x)}\;.
\]
Using \eqref{Eqn2} and integrating by parts, we compute
\begin{multline*}
\frac{d\sigma}{dt}=\frac 1{K_M}\,\frac d{dt}\ird{|x|^2\,u(t,x)}\\
=\frac{2\,d\,(1-m)}{m\,K_M}\,\sigma^{\frac d2(m-m_c)}\ird{u^m}-4\,\sigma\;.
\end{multline*}
On the other hand, by Lemma~\ref{Lem:ChoiceOfSigma}, we have
\[
\ird{B_{\sigma(t)}^{m-1}\,u(t,x)}=\ird{B_{\sigma(t)}^m}\;,
\]
so that
\begin{multline*}
\mathcal F_{\sigma(t)}[u(t,\cdot)]=\ird{\frac{u^m-B_{\sigma(t)}^m}{m-1}}\\=
\frac{2\,m}{d\,(1-m)^2}\,K_M\,\sigma^{\frac d2(1-m)}-\frac 1{1-m}\ird{u^m}\;.
\end{multline*}
Hence we have proved that
\[
\frac{d\sigma}{dt}=-2\,d\,\frac{(1-m)^2}{m\,K_M}\,\sigma^{\frac d2(m-m_c)}\,\mathcal F_{\sigma(t)}[u(t,\cdot)]\le 0\;,
\]
which completes the proof. \end{proof}

\begin{remark}\label{Rem3} We may notice that the second moment converges to a well defined value:
\[
\lim_{t\to\infty}\ird{|x|^2\,u(t,x)}=K_M\,\sigma_\infty\;.
\]
This is not the one which is usually found by considering the time-dependent rescaling corresponding to the self-similar Barenblatt
solutions.\end{remark}

\section{Proof of Theorem~\ref{Thm:Main}}\label{Sec:Proof}

We are now ready to resume with the relative entropy estimates and conclude the proof of Theorem~\ref{Thm:Main}. Using \eqref{Eqn:Entropy},
\eqref{Eqn:Fisher} and Corollary~\ref {Cor:LinearizedProblem}, we find as in \cite{BDGV} that \be{Ineq:Interpolation} \mathcal F_\sigma[u]\le
\frac{h^{2-m}\,[1+X(h)]}{2\,\big[\Lambda_{\alpha,d}-\sigma\,Y(h)\big]}\,m\,\sigma^{\frac d2(m-m_c)}\,\mathcal I_\sigma [u] \ee as soon as
$0<h<h_*:=\min\{h>0\,:\,\Lambda_{\alpha,d}-\sup_{t\in\R^+}\sigma(t)\,Y(h)\ge 0\}$. Two differences with \cite{BDGV} arise: $\Lambda_{\alpha,d}$
has been improved in Corollary~\ref {Cor:LinearizedProblem}, to the price of a factor~$\sigma$, which however plays no role because it also
appears in the computation of $\frac d{dt}\mathcal F_{\sigma(t)}[u(t,\cdot)]$. The $m$ factor is present because $\mathcal F_\sigma$ has not been
normalized as in \cite{BDGV}, and also because the equation for $v$ is not the same.

As in \cite{BDGV}, uniform relative estimates hold, according to \cite[Inequality (5.33)]{BGV}: for some positive constant $\mathsf C$, we have
\[
0\le h(t)-1\le\mathsf C\,\mathcal F_{\sigma(t)}[u(t,\cdot)]^\frac{1-m}{d+2-(d+1)m}\,.
\]
Summarizing, we end up with a system of nonlinear differential inequalities, with $h$ as above and, at least for some $t_*>0$ large enough,
\[
\frac{d}{dt}\mathcal F_{\sigma(t)}[u(t,\cdot)] \le-\,2\,\frac{\Lambda_{\alpha,d}-\sigma(t)\,Y(h)}{\big[1+X(h)\big]\,h^{2-m}}\,(1-m)\,\mathcal
F_{\sigma(t)}[u(t,\cdot)]
\]
 for any $t>t_*$. Gronwall type estimates then show that
\[
\limsup_{t\to\infty}\,{\rm e}^{2\,(1-m)\,\Lambda_{1/(m-1),d}\,t}\mathcal F_{\sigma(t)}[u(t,\cdot)]<\infty\;.
\]
Notice as in \cite{BBDGV} that for some constant $c>0$, $\lim_{t\to\infty}e^{c\,t}(h(t)-1)=0$, so that the fact that the quotient $\mathcal
I_\sigma [u]/\mathcal F_\sigma[u]$ in \eqref{Ineq:Interpolation} is not estimated exactly by $(2\,\Lambda_{\alpha,d})/(m\,\sigma)$ plays no role
for the rate of convergence. This completes the proof of Theorem~\ref{Thm:Main} with $\gamma(m)=(1-m)\,\Lambda_{1/(m-1),d}$ (see Fig.~2).\finprf

\begin{remark}\label{Rem4}
Exactly as in \cite[Corollary 3]{BDGV}, explicit estimates of the constants can be obtained. If $v_0/B_1$ is uniformly bounded from above and from
below by two positive constants, an estimate of $\limsup_{\tau\to\infty}R(\tau)^{\gamma(m)}\,\mathcal F_{\sigma(t)}[u]$ in Theorem~\ref{Thm:Main}
can be given in terms of $v_0$ by computing a Gronwall estimate.
\end{remark}

\begin{figure}[hb]\begin{center}\includegraphics[width=12cm]{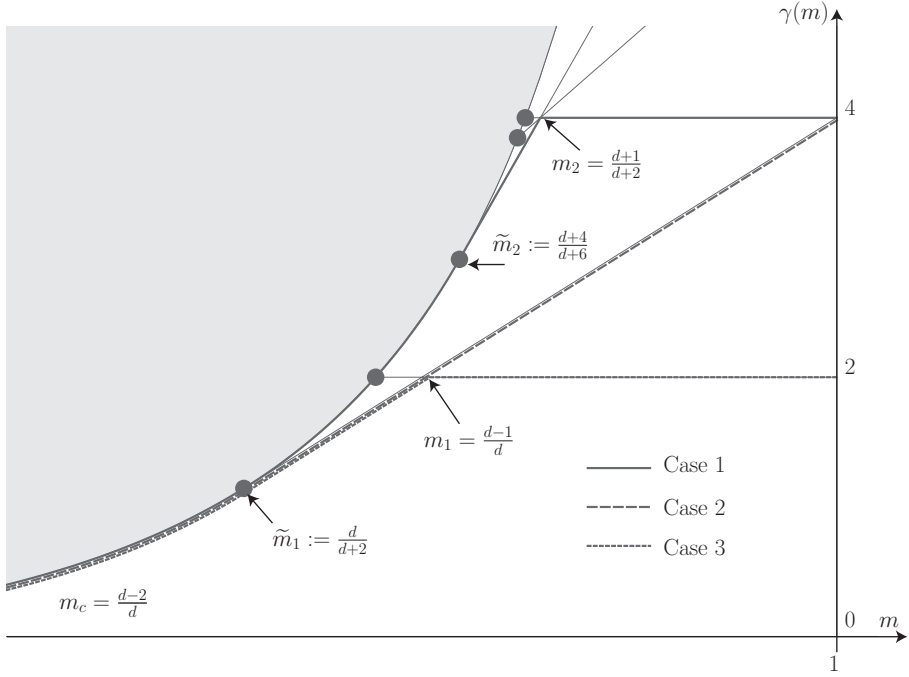}
\caption{For $d=5$, the value of $m\mapsto\gamma(m)$ is given by the curve of Case 1 when no
 assumptions are made on the initial data. The curve of Case 2 corresponds to the case
  where the center of mass is chosen at the origin, as in~\cite{BDGV}, while the curve of Case 3
  corresponds to the exponent found in Theorem~\ref{Thm:Main}.}\end{center}\end{figure}

\section{Concluding remarks}\label{Sec:Conclusion}

The case of the heat equation, \emph{i.e.} $m=1$, is not covered in our approach. However, we may pass to the limit as $m\to 1_-$ in
Corollary~\ref{Cor:LinearizedProblem}, for the special choice $1/\sigma=2\,(1-m)$. Both weights $B_\sigma^{2-m}$ and $B_\sigma$ converge to the Gaussian weight, so that the conditions of Corollary~\ref{Cor:LinearizedProblem} become
\[
\ird{(1,x,|x|^2)\,f\;e^{-|x|^2/2}}=(0,0,0)\quad\mbox{and}\quad\nabla f\in L^2(e^{-|x|^2/2}\,dx)\;.
\]
By requiring the orthogonality with respect to all Hermite polynomials up to order two, we achieve the improved Poincar\'e inequality
\[
2\int_{\R^d}|f|^2\;e^{-|x|^2/2}\;dx\le \int_{\R^d}|\nabla f|^2\;e^{-|x|^2/2}\;dx\;.
\]
Compared with the results of \cite{BDGV}, nothing is gained, as
\[
\lim_{m\to 1_-}(1-m)\,\lambda_{01}=\lim_{m\to 1_-}(1-m)\,\lambda_{20}=4\;,
\]
where $\lambda_{\ell k}$ are defined in Proposition~\ref{Prop:Spectrum} and $\alpha=1/(m-1)$. See \cite{BBDE} for more details on improved
convergence rates of relative entropies in case $m=1$.

\medskip
For completeness, let us extend our results to the case $d=1$, which is very simple. Eigenvalues of $\mathcal L_{\alpha,1}$ are ordered uniformly
with respect to $\alpha$, according to Proposition~\ref{Prop:Spectrum}. Let $\alpha<-1/2$ and consider $f\in L^2\,(d\mu_{\alpha-1})$ such that
\[
\int_\R f\,d\mu_{\alpha-1}=0\;,\quad\int_\R x\,f\,d\mu_{\alpha-1}=0\quad\mbox{and}\quad\int_\R |x|^2\,f\,d\mu_{\alpha-1}=0\;.
\]
Then \eqref{gap} holds with
\[
\Lambda_{\alpha,d}=\left\{\begin{array}{ll}
(\alpha-\tfrac 12)^2&\mbox{if}\;\alpha\in\left[-\tfrac 52,-\tfrac 12\right)\;,\vspace{6pt}\cr
-\,6\,(\alpha+1))&\mbox{if}\;\alpha\in(-\infty,-\tfrac 52]\,.
\end{array}\right.
\]

\begin{theorem}\label{Thm:MainOneD} Assume that $d=1$, $m\in(1/3,1)$.
Let $v$ be a solution of~\eqref{Eqn} with initial datum $v_0\in L^1_+(\R)$ such that $v_0^m$ and $|y|^2\,v_0$ are integrable. With the above
notations, we have $\limsup_{\tau\to\infty}R(\tau)^{\gamma(m)}\,\mathcal E(\tau)<\infty$, where $R(\tau)\sim\tau^{\frac 1{m+1}}$ as
$\tau\to\infty$ and
\[
\gamma(m)=\left\{\begin{array}{ll}
\frac{(3-m)^2}{4\,(1-m)}&\mbox{if}\;m\in\left(\tfrac 13,\tfrac 35\right]\;,\vspace{6pt}\cr
6\,m&\mbox{if}\;m\in\left[\tfrac 35,1\right)\;,\cr
\end{array}\right.
\]
\end{theorem}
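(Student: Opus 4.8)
\sketch
The plan is to follow verbatim the scheme used for the proof of Theorem~\ref{Thm:Main} in Section~\ref{Sec:Proof}, specialised to $d=1$, where all the ingredients are actually simpler because Proposition~\ref{Prop:Spectrum} gives a clean, totally ordered spectrum $\lambda_k=k\,(1-2\,\alpha-k)$ for $k\in\N\cap[1,1/2-\alpha]$. First I would record that, for $d=1$ and $m\in(1/3,1)$, one has $\widetilde m_1=1/3$ and the second moment of $v_0$ is finite, so Lemma~\ref{Lem:ChoiceOfSigma} applies: the scale $R(\tau)$ defined by \eqref{Eqn:Scale} and the parameter $\sigma(t)$ are well defined, $\sigma$ is decreasing with a positive limit $\sigma_\infty$, and $R(\tau)\sim\tau^{1/(m+1)}$ (since $d\,(m-m_c)=m+1$ when $d=1$), exactly as in Lemma~\ref{Lem:Asymptotic}. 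The orthogonality properties of Lemma~\ref{Lem:OrthogonalityProperties} hold for $f=B_\sigma^{m-1}(u/B_\sigma-1)$ in the full range $m>\widetilde m_1=1/3$: mass, first moment and second moment of $f\,B_\sigma^{2-m}$ all vanish.

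Next I would invoke the one-dimensional improved Hardy--Poincar\'e inequality stated just above Theorem~\ref{Thm:MainOneD}: for $\alpha<-1/2$ and $f$ orthogonal (in $L^2(d\mu_{\alpha-1})$) to $1$, $x$ and $|x|^2$,
\[
\Lambda_{\alpha,1}\int_\R|f|^2\,d\mu_{\alpha-1}\le\int_\R|\nabla f|^2\,d\mu_\alpha\;,
\]
with $\Lambda_{\alpha,1}=(\alpha-\tfrac12)^2$ for $\alpha\in[-\tfrac52,-\tfrac12)$ and $\Lambda_{\alpha,1}=-6\,(\alpha+1)$ for $\alpha\le-\tfrac52$. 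This is obtained exactly as in Corollary~\ref{Cor:ImprovedHardyPoincare}: having removed the eigenspaces of $\lambda_1$ and the (nonexistent, in $d=1$) $\ell=1$ mode, the relevant gap is $\min\{\lambda_2,\lambda_{\alpha,1}^{\rm cont}\}$ with $\lambda_2=2\,(-2\,\alpha-1)$ wait --- more carefully, with the eigenvalue ordering of Proposition~\ref{Prop:Spectrum} the first surviving discrete eigenvalue after removing $\lambda_1$ is $\lambda_2=2\,(1-2\,\alpha-2)=-2\,(2\,\alpha+1)$; comparing with $\lambda_{\alpha,1}^{\rm cont}=(\alpha-\tfrac12)^2$ and noting that in $d=1$ the role played by $\lambda_{2,0}$ in higher dimension is taken by $\lambda_3=3\,(1-2\,\alpha-3)=-6\,(\alpha+1)$ (the next eigenvalue present when $1/2-\alpha\ge3$, i.e. $\alpha\le-5/2$) yields the stated two-case formula after checking which of the candidates is smallest on each subinterval. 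Then scaling as in Corollary~\ref{Cor:LinearizedProblem} (with $f(x)=g(x/\sqrt{C_M\,\sigma})$) transfers the inequality to the $B_\sigma$-weighted setting, with the extra factor $\sigma^{\frac12(m-m_c)}$ on the Fisher-information side.

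Finally I would run the Gronwall argument of Section~\ref{Sec:Proof} unchanged: combine the entropy sandwich \eqref{Eqn:Entropy}, the Fisher-information bound \eqref{Eqn:Fisher} and the scaled improved inequality to get, for $t$ large enough (so that $h(t)$ is close enough to $1$ and $\Lambda_{\alpha,1}-\sigma(t)\,Y(h)>0$),
\[
\frac{d}{dt}\mathcal F_{\sigma(t)}[u(t,\cdot)]\le-2\,\frac{\Lambda_{\alpha,1}-\sigma(t)\,Y(h)}{[1+X(h)]\,h^{2-m}}\,(1-m)\,\mathcal F_{\sigma(t)}[u(t,\cdot)]\;,
\]
together with the uniform relative estimate $0\le h(t)-1\le\mathsf C\,\mathcal F_{\sigma(t)}[u(t,\cdot)]^{(1-m)/(3-2m)}$ (the $d=1$ case of \cite[Inequality (5.33)]{BGV}), which forces $h(t)\to1$ fast enough that the error terms $X(h),Y(h)$ do not affect the exponential rate. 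This gives $\limsup_{t\to\infty}e^{2\,(1-m)\,\Lambda_{1/(m-1),1}\,t}\,\mathcal F_{\sigma(t)}[u(t,\cdot)]<\infty$, and since $t=\tfrac12\log R$ and $\mathcal E(\tau)=\mathcal F_{\sigma(t)}[u(t,\cdot)]$, this is precisely $\limsup_{\tau\to\infty}R(\tau)^{\gamma(m)}\,\mathcal E(\tau)<\infty$ with $\gamma(m)=(1-m)\,\Lambda_{1/(m-1),1}$. Substituting $\alpha=1/(m-1)$ into the two-case formula for $\Lambda_{\alpha,1}$: the condition $\alpha\ge-5/2$ reads $m\ge3/5$, giving $\gamma(m)=(1-m)\,(\tfrac1{m-1}-\tfrac12)^2=(1-m)\,\big(\tfrac{3-m}{2\,(1-m)}\big)^2=\frac{(3-m)^2}{4\,(1-m)}$ there, while for $m\le3/5$ (i.e. $\alpha\le-5/2$) one gets $\gamma(m)=(1-m)\,\big(-6\,(\tfrac1{m-1}+1)\big)=(1-m)\cdot\tfrac{6\,m}{1-m}=6\,m$; wait, I need to recheck the matching of endpoints against the statement, since the statement has $(3-m)^2/(4(1-m))$ on $(1/3,3/5]$ and $6m$ on $[3/5,1)$, whereas this computation gives the opposite assignment --- so the correct reading is that for $\alpha\le-5/2$, i.e. $m\le3/5$, $\Lambda_{\alpha,1}=(\alpha-1/2)^2$ is the smaller one and gives $\gamma(m)=(3-m)^2/(4(1-m))$, while for $\alpha\in[-5/2,-1/2)$, i.e. $m\in[3/5,1)$, $\Lambda_{\alpha,1}=-6(\alpha+1)$ gives $\gamma(m)=6m$; this is consistent once one notes that $(3-m)^2/(4(1-m))$ and $6m$ agree at $m=3/5$ (both equal $18/5$), which is the required sanity check. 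The main obstacle, such as it is, is bookkeeping: getting the eigenvalue comparison in Proposition~\ref{Prop:Spectrum} right so that the correct branch of $\Lambda_{\alpha,1}$ is selected on each $m$-interval, and verifying the endpoint-matching at $m=3/5$; everything analytic is a transcription of the $d\ge2$ proof.
\finprf
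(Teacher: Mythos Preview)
Your approach is exactly the paper's (implicit) one: the paper simply records the one-dimensional improved Hardy--Poincar\'e constants and states Theorem~\ref{Thm:MainOneD}, leaving it understood that the proof of Theorem~\ref{Thm:Main} carries over verbatim. Your sketch reproduces this faithfully, and the final formula $\gamma(m)=(1-m)\,\Lambda_{1/(m-1),1}$ with the two branches is correct.

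There is, however, a genuine bookkeeping slip in your conversion between $\alpha$ and $m$ that you should clean up. Since $\alpha=1/(m-1)$ is \emph{decreasing} on $(1/3,1)$, the dictionary is
\[
\alpha\in\big[-\tfrac52,-\tfrac32\big)\ \Longleftrightarrow\ m\in\big(\tfrac13,\tfrac35\big]\,,\qquad
\alpha\in\big(-\infty,-\tfrac52\big]\ \Longleftrightarrow\ m\in\big[\tfrac35,1\big)\,,
\]
which is the opposite of what you wrote (``$\alpha\ge-5/2$ reads $m\ge3/5$'' is false). Your ``corrected reading'' then swaps \emph{both} the $\alpha$-intervals \emph{and} the $\Lambda_{\alpha,1}$-branches relative to the stated inequality, and the two errors cancel to give the right answer. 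The clean computation is: for $m\in(1/3,3/5]$ one has $\alpha\in[-5/2,-3/2)$, hence $\Lambda_{\alpha,1}=(\alpha-\tfrac12)^2$ and $\gamma(m)=(3-m)^2/(4(1-m))$; for $m\in[3/5,1)$ one has $\alpha\le-5/2$, hence $\Lambda_{\alpha,1}=-6(\alpha+1)$ and $\gamma(m)=6m$. Once this is fixed, the argument is complete and matches the paper.
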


\bigskip\noindent{\it Acknowledgments.\/}{ \small The authors acknowledge support both by the ANR-08-BLAN-0333-01 project CBDif-Fr (JD) and by MIUR project ``Optimal mass transportation, geometrical and functional inequalities with applications'' (GT). The warm hospitality of the laboratory Ceremade of the University of Paris-Dauphine, where this work have been partially done, is kindly acknowledged. The authors thank two anonymous referees for their careful reading of the paper.}

\par\noindent{\scriptsize\copyright\,2011 by the authors. This paper may be reproduced, in its entirety, for non-commercial purposes.}


\def\cprime{$'$}

\end{document}